\theoremstyle{plain}  
      \newtheorem{theorem}{Theorem}[section]
      \newtheorem{lemma}[theorem]{Lemma}
      \newtheorem{corollary}[theorem]{Corollary}
      \newtheorem{proposition}[theorem]{Proposition}
      \theoremstyle{definition}
      \newtheorem{definition}[theorem]{Definition}
       \newtheorem{example}{Example}[section] 
      \theoremstyle{remark}
\begin{document}
\author{N. Ghroda}
  \address{Department of Mathematics\\University
  of York\\Heslington\\York YO10 5DD\\UK}
\email{ng521@york.ac.uk}




   \title[Primitive inverse semigroups of I-quotients]{Primitive inverse semigroups of left I-quotients}

   \begin{abstract}
     A subsemigroup  $S$  of an inverse semigroup  $ Q $  is a left I-order in  $Q$,  if every element in  $Q$  can be written as  $a^{-1}b$    where  $a ,b \in S$  and  $a^{-1}$  is the inverse of $a$  in the sense of inverse semigroup theory.  We study a characterisation of semigroups which have a primitive inverse semigroup of left I-quotients.
   \end{abstract}

   \keywords{ primitive inverse semigroup , I-quotients, I-order}

   \date{\today}

   \maketitle
 
\section{Introduction}

Clifford \cite{clifford} showed that from any right cancellative monoid  $S$ with (LC) Condition, there is a bisimple inverse monoid  $Q$ such that  $Q=S^{-1}S$, that is, every element  $q$  in  $Q$ can be written as  $a^{-1}b$    where  $a ,b \in S$. By saying that a semigroup  $S$ has the (LC) Condition we mean for any  $a,b\in S$ there is an element  $c\in S$ such that  $Sa\cap Sb=Sc$. In \cite{GG} the authors have  extended Clifford's work to a left ample semigroup with (LC) where they  introduced the following definition of left I-order in inverse semigroups.\\
 \\
 Let  $Q$ be an inverse semigroup. A  subsemigroup  $S$  of  $Q$  is a \emph{left I-order} in  $Q$ and $Q$ is a \emph{semigroup of left I-quotient} of $S$, if every element in  $Q$  can be written as  $a^{-1}b$    where  $a ,b \in S$  and  $a^{-1}$  is the inverse of  $a$  in the sense of inverse semigroup theory. \emph{Right I-order} and \emph{semigroup of right I-quotients} is defined dually. If  $S$ is both a left and a right I-order in an inverse semigroup  $Q$, we say that  $S$  is an \emph{I-order} in  $Q$ and $Q$ is a semigroup of \emph{I-quotients} of $S$.
 This notion extends the classical notion of left order in an inverse semigroup  $Q$, due to Fountain and Petrich \cite{pjhon}. We say that  a subsemigroup  $S$  of a semigroup $Q$  is a \emph{left order} in  $Q$   or  $Q$  is a \emph{semigroup of left quotients} of $S$ if every element of   $ Q $  can be written as   $a^{\sharp}b$  where  $a  , b \in S$  and  $a^{\sharp}$  is the inverse of   $a$  in a subgroup of  $Q$ and if, in addition, every square-cancellable element  (an element  $a$ of a semigroup  $S$ is square-cancellable if  $a \mathcal{H}^{*}a^{2}$) lies in a subgroup of  $Q$.\\
\\
  Clearly if  $S$ has an inverse semigroup of left quotients  $Q$, then  $Q$ is also a semigroup of left I-quotients, but the converse is not true as will see by an example. \emph{Right order} and \emph{semigroup of right quotients} are defined dually. If  $S$ is both a left and right order in  $Q$, then  $S$  is an \emph{order }in  $Q$ and  $Q$ is a\emph{ semigroup of quotients} of  $S$. \\
\\
In this article we focus on studying left I-orders in primitive inverse semigroups.\\
\\ 
 In Section~\ref{sec:prelim} we begin by investigating some properties of semigroups which are left I-orders in  primitive inverse semigroups. The next section is devoted to the proof of Theorem~\ref{maintheorem} which characterizes those semigroups which have a primitive inverse semigroup of left I-quotients. We then specialise our result to left I-orders in Brandt semigroups a result which may be regarded as a generalisation of the main theorem in \cite{fountain}, which characterised left orders in Brandt semigroups. The statement
of an alternative description of left I-orders in Brandt semigroups has been
privately communicated by Cegarra \cite{cp}. \\
  In Section~\ref{sec:UBrandt} we show that a primitive inverse semigroup of left I-quotients is  unique up to isomorphism. Section~\ref{sec:I-order}  then concentrates on  I-orders (two-sided case) in primitive inverse semigroups. In the final Section we give characterizations of adequate (ample) semigroups having primitive inverse semigroups of left I-quotients.

   \section{Preliminaries}
   \label{sec:prelim} 
Throughout this article, we shall follow the terminologies and notation of \cite{clifford}. The set of non-zero elements of a semigroup  $S$ will be denoted by  $S^{*}$.\\   
The relations $\mathcal{R^{*}},  \mathcal{L^{*}}$ and  $\mathcal{H^{*}}$ play a significant role in this article. It is well known that  the relation   $\mathcal{R^{*}}$ is defined on a semigroup  $S$ by the rule that  $a\,\mathcal{R^{*}}\,b$ in  $S$ if  $a\,\mathcal{R}\,b$ in some oversemigroup $T$ of  $S$, and this equivalent to 
 $a\,\mathcal{R^{*}}\,b$ if and only if  \[ xa=ya  \quad \mbox{if and only if} \quad xb=yb\] for all  $x,y \in S^{1}$. The relation  $\mathcal{L^{*}}$ is defined dually and  $\mathcal{H^{*}}=\mathcal{R^{*}}\cap  \mathcal{L^{*}}$. It is clear that  $\mathcal{R} \subseteq  \mathcal{R^{*}}$ and  $\mathcal{L} \subseteq  \mathcal{L^{*}}$ where  $\mathcal{R}$ and  $\mathcal{L}$ are the usual Green's relations. \\

We recall that a semigroup  $S$ with zero is defined to be \emph{categorical at  0}  if whenever  $a,b,c \in S$ are such that  $ab\neq 0$ and  $ bc \neq 0$, then  $abc\neq 0$. We say that,  $S$ is \emph{0-cancellative} if  $b=c$ follows from  $ab=ac\neq 0$  and from  $ba=ca\neq0$. \\
\\
An  inverse semigroup   $S$ with zero  is a \emph{primitive inverse} semigroup if all its nonzero
idempotents are primitive, where an idempotent  $e$ of  $S$ is called \emph{primitive} if  $e\neq 0$ and  $f\leq e$ implies  $f=0$ or  $e=f$. We will use the following facts about primitive inverse semigroups heavily through this article.

\begin{lemma}\label{Gjhon}\cite{Gjhon}
Let  $Q$ be a primitive inverse semigroup.\\
$(i)$ \ $Q$ is categorical at 0.\\
$(ii)$ \ If  $e,f \in E^*(Q)$, then  $ef\neq 0$ implies  $e=f$.\\
$(iii)$ \ If  $e\in E^*$ and  $s\in Q^*$, then \[es\neq 0 \quad \mbox{implies} \quad es=s \quad \mbox{and} \quad se\neq 0 \quad \mbox{implies} \quad se=s.\]
$(iv)$ \ If $a,s \in Q^*$ and  $as=a$, then  $s=a^{-1}a$. Dually, if $sa=a$, then  $s=aa^{-1}$\\
$(v)$\ If $ab\neq 0$, then $a^{-1}a=b^{-1}b$.\end{lemma}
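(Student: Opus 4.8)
My plan is to derive all five statements directly from the definition of primitivity together with the standard facts of inverse semigroup theory—namely that idempotents commute and that $a=aa^{-1}a$, so that $aa^{-1}$ and $a^{-1}a$ are idempotents, and nonzero whenever $a\neq 0$. The engine of the whole lemma is part $(ii)$, which I would prove first. Given nonzero idempotents $e,f$ with $ef\neq 0$, I note that $ef$ is again an idempotent (idempotents commute) satisfying $ef\leq e$ and $ef\leq f$ in the natural partial order; since $e$ is primitive and $ef$ is a nonzero idempotent below it, $ef=e$, and symmetrically $ef=f$, whence $e=f$.

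With $(ii)$ in hand, parts $(iii)$ and $(iv)$ follow quickly. For $(iii)$, suppose $e\in E^*$, $s\in Q^*$ and $es\neq 0$. Since $s=ss^{-1}s$, the idempotent $e\cdot ss^{-1}$ cannot be zero (else $es=e(ss^{-1})s=0$), so by $(ii)$ we get $e=ss^{-1}$ and therefore $es=ss^{-1}s=s$; the statement for $se$ is dual, using $s^{-1}s$. For $(iv)$, from $as=a$ I would left-multiply by $a^{-1}$ to obtain $(a^{-1}a)s=a^{-1}a$; writing $e=a^{-1}a\in E^*$, this says $es=e\neq 0$, so $(iii)$ gives $es=s$ and hence $s=e=a^{-1}a$; the dual case is identical.

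For $(v)$ I would use the identity $ab=a(a^{-1}a\cdot bb^{-1})b$: if the inner idempotent product $a^{-1}a\cdot bb^{-1}$ were zero, then $ab=0$, so $ab\neq 0$ forces $a^{-1}a\cdot bb^{-1}\neq 0$, and $(ii)$ yields that the adjacent idempotents coincide, $a^{-1}a=bb^{-1}$. Finally, $(i)$ is the part I expect to require the most care, since it is the only statement asserting that a product is nonzero rather than deducing a relation from a nonzero product. Here I would first record the converse of $(v)$: if $a^{-1}a=bb^{-1}\neq 0$ then $ab\neq 0$, because $ab=0$ would give $a^{-1}ab=0$, yet $a^{-1}ab=(bb^{-1})b=b\neq 0$. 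Now, assuming $ab\neq 0$ and $bc\neq 0$, I would apply $(v)$ twice to get $a^{-1}a=bb^{-1}$ and $b^{-1}b=cc^{-1}$, then compute $(bc)(bc)^{-1}=b(cc^{-1})b^{-1}=b(b^{-1}b)b^{-1}=bb^{-1}$, so that the inner idempotents of the product $a(bc)$ satisfy $a^{-1}a=bb^{-1}=(bc)(bc)^{-1}\neq 0$; the converse of $(v)$ then gives $abc=a(bc)\neq 0$.

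The only real subtlety throughout is bookkeeping: keeping the left idempotent $aa^{-1}$ and the right idempotent $a^{-1}a$ of each element straight, and checking at each step that the relevant idempotents are genuinely nonzero—which is automatic here, since every factor in sight is a nonzero element. With that care taken, the entire lemma reduces to repeated applications of the single primitivity fact $(ii)$.
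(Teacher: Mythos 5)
The paper offers no proof of this lemma at all---it is quoted as a known result from Fountain and Gomes \cite{Gjhon}---so there is nothing internal to compare against; judged on its own, your argument is correct and essentially self-contained, resting only on the commutativity of idempotents and $a=aa^{-1}a$. Deriving everything from the single primitivity fact $(ii)$ is the natural route, and your handling of $(i)$ is the one part that genuinely needs an idea: isolating the converse of $(v)$ and then computing $(bc)(bc)^{-1}=b(cc^{-1})b^{-1}=b(b^{-1}b)b^{-1}=bb^{-1}$ is exactly where categoricity at $0$ comes from. One remark: in $(v)$ you prove $a^{-1}a=bb^{-1}$, whereas the statement as printed asserts $a^{-1}a=b^{-1}b$. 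Your version is the right one. The printed equality is false in general---in a Brandt semigroup $B(G,\{1,2\})$ take $a=(1,g,2)$ and $b=(2,h,1)$, so $ab\neq 0$ while $a^{-1}a=(2,1,2)$ and $b^{-1}b=(1,1,1)$---and it is your version that the paper actually uses later, for instance in the proof of Proposition~\ref{Inthenameofgod}(3), where $a^{-1}b\neq 0$ is said to yield $aa^{-1}=bb^{-1}$ (apply your $(v)$ to the product $a^{-1}\cdot b$). So the statement contains a typographical slip, and your proof establishes the intended, correct claim.
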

 From the above lemma we can notice easily that a primitive inverse semigroup is 0-cancellative.\\
To investigate the properties of a semigroup  $S$ which is a left I-order in a primitive inverse semigroup  $Q$ we need the relations  $\lambda\ , \rho$ and  $\tau$  which are introduced in \cite{Lall} on any semigroup with zero as follows:\\
\[a\,\lambda\,b \ \mbox{if and only if} \ a=b=0 \ \mbox{or}\ Sa\cap Sb\neq 0 \]
\[a\,\rho\,b \ \mbox{if and only if} \ a=b=0 \ \mbox{or}\ aS\cap bS\neq 0 \]
\[\tau=\rho \cap \lambda.\]
\begin{lemma}\label{conc}
 Let $S$ be any semigroup with zero. \\
 $(i)$ If $a\,\mathcal{R^*}\,b$ where $a,b\neq 0$, then \[xa\neq 0 \quad \mbox{if and only if}\quad xb\neq 0.\]
 $(ii)$ If $S$ is categorical at 0 and  0-cancellative, then \[x,xa\neq 0 \quad \mbox{implies that }\quad  x\,\mathcal{R^*}\,xa,\] for any $x,a \in S$.\end{lemma}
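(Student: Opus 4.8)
The plan is to verify each part directly from the characterisation of $\mathcal{R}^{*}$ recalled in the preliminaries, namely that $c\,\mathcal{R}^{*}\,d$ holds precisely when, for all $x,y\in S^{1}$, $xc=yc$ if and only if $xd=yd$. The key observation enabling both parts is that $0\in S^{1}$ and that $0c=0$ for every $c\in S$, so zeroing out one variable in the defining biconditional is legitimate.

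For part $(i)$ I would apply the defining condition for $a\,\mathcal{R}^{*}\,b$ to the pair $(x,0)\in S^{1}\times S^{1}$. Since $0a=0$ and $0b=0$, this reads $xa=0$ if and only if $xb=0$; that is exactly the contrapositive of the claimed equivalence $xa\neq 0 \Leftrightarrow xb\neq 0$. No case analysis is needed here, as the biconditional in the definition of $\mathcal{R}^{*}$ already supplies both directions at once.

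For part $(ii)$ I would check the defining condition for $x\,\mathcal{R}^{*}\,xa$, i.e. that $ux=vx$ if and only if $u(xa)=v(xa)$ for all $u,v\in S^{1}$. The forward implication is automatic, obtained by right-multiplying $ux=vx$ by $a$. For the converse, suppose $(ux)a=(vx)a$. If this common value is nonzero, then right $0$-cancellativity, applied with the factors $ux$ and $vx$, gives $ux=vx$ immediately. The remaining case is $(ux)a=(vx)a=0$, where I would argue that each of $ux$ and $vx$ is itself $0$: assuming instead that, say, $ux\neq 0$, the hypothesis $xa\neq 0$ forces $u\neq 1$ (otherwise $(ux)a=xa\neq 0$), hence $u\in S$, and then categoricity at $0$ applied to the nonzero products $ux$ and $xa$ yields $uxa\neq 0$, contradicting $(ux)a=0$. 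Thus $ux=0=vx$, and in particular $ux=vx$.

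The only genuinely delicate point is this zero case of the converse in $(ii)$; the rest is a transcription of the definitions. Categoricity at $0$ is precisely the hypothesis that rules out a nonzero product $ux$ being annihilated by $a$ while $xa$ survives, and the separate treatment of $u=1$ versus $u\in S$ is needed only because categoricity is phrased for honest elements of $S$ rather than for $S^{1}$.
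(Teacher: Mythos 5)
Your proof is correct and follows essentially the same route as the paper: part (i) is the specialisation $y=0$ of the defining biconditional for $\mathcal{R}^*$ (which the paper dismisses as clear), and part (ii) splits the converse into the nonzero case handled by $0$-cancellativity and the zero case handled by categoricity at $0$ after noting $u,v\neq 1$, exactly as the paper does. Your write-up just makes explicit the small points the paper leaves implicit.
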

\begin{proof}$(i)$ \ Clear. \\
$(ii)$ Let $x,a \in S$ with $xa\neq 0$. If $u,v \in S^1$ and   $ux=vx$, then clearly $uxa=vxa$. \\
\\
Conversely, if $uxa=vxa\neq 0$, then by 0-cancellativity, $ux=vx\neq 0$. On the other hand, if $uxa=vxa=0$, then 
by categoricity at 0, $ux=vx=0$ (note that in this case, $u,v \neq 1$).\end{proof}
\begin{definition}
Let  $S$  be a subsemigroup of an inverse semigroup  $Q$. Then  $S$ is a \emph{straight} left I-order in  $Q$  if, every  $q \in Q$   can be write as $q=a^{-1}b$, where  $a,b \in S$  and   $a$  and  $b$  are  $\mathcal{R}$-related \ in  $Q$.
\end{definition}
In the next lemma we introduce some properties of a semigroup which has a primitive inverse semigroup of left I-quotients. \\
\\
We made the convention that if  $S$ is a left I-order in  $Q$, then  $\mathcal{R}$ and  $\mathcal{L}$ will be relations on  $Q$ and  $\mathcal{R^*}, \mathcal{L^*},\lambda, \rho$ and  $\tau$ will refer to  $S$.
\begin{proposition}\label{Inthenameofgod}
Let \ $S$  be a subsemigroup of a primitive inverse semigroup  $Q$. If  $S$  is left I-order in  $Q$, then: \\
$1)$ \ $S$ contains the  0  element of  $Q$;\\
$2)$ \ $ \mathcal{L} \cap (S \times S) =\lambda$;\\
$3)$ \ $S$  is a straight left I-order in  $Q$; \\
$4)$ \ $Sa\neq 0$  for all  $a \in S^{*}$; \\ 
$5)$ \ $\mathcal{R}\cap(S\times S)=\mathcal{R}^{*}$; \\
$6)$ \ $\rho \subseteq \mathcal{R^*}$.
 \end{proposition}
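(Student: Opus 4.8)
My plan is to prove the six assertions in the order $1, 3, 2, 4, 5, 6$, since several of the later statements rely on straightness (assertion 3) and, throughout, on the principle -- available from Lemma~\ref{Gjhon} -- that in a primitive inverse semigroup a product $xy$ is nonzero precisely when the idempotents $x^{-1}x$ and $yy^{-1}$ coincide. I will also use at the outset that, being a subsemigroup of $Q$, $S$ inherits categoricity at $0$ and $0$-cancellativity from $Q$ (Lemma~\ref{Gjhon}(i) and the remark after it), so that Lemma~\ref{conc} applies to $S$.

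For assertion 1, I write the zero of $Q$ as $0=a^{-1}b$ with $a,b\in S$; if either factor is $0$ we are done, so assume $a,b\neq 0$. Multiplying on the left by $a$ and on the right by $b^{-1}$ gives $(aa^{-1})(bb^{-1})=0$, so by Lemma~\ref{Gjhon}(ii) the nonzero idempotents $aa^{-1}$ and $bb^{-1}$ are distinct. Suppose for contradiction that $0\notin S$; then every product of elements of $S$ is nonzero, and applying the nonzero-product criterion to $a^{2}$, $b^{2}$ and $ab$ forces $aa^{-1}=a^{-1}a$, $bb^{-1}=b^{-1}b$ and $a^{-1}a=bb^{-1}$, whence $aa^{-1}=bb^{-1}$, a contradiction. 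Hence $0\in S$. For assertion 3, take any nonzero $q=a^{-1}b$ with $a,b\in S$; since $a^{-1}b\neq 0$, the criterion applied to the product $a^{-1}\cdot b$ gives $aa^{-1}=bb^{-1}$, i.e. $a\,\mathcal{R}\,b$ in $Q$, while the zero case is covered by $0=0^{-1}0$ using assertion 1. Thus $S$ is straight.

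Assertions 2 and 4 then exploit straightness. For $\lambda\subseteq\mathcal{L}$, from $sa=tb\neq 0$ one computes $(sa)^{-1}(sa)=a^{-1}(s^{-1}s)a=a^{-1}a$ and likewise $(tb)^{-1}(tb)=b^{-1}b$, so $a\,\mathcal{L}\,sa=tb\,\mathcal{L}\,b$. For the converse, given nonzero $a\,\mathcal{L}\,b$ I note $ab^{-1}\neq 0$ (since $a^{-1}a=b^{-1}b$), write it straightly as $ab^{-1}=c^{-1}d$ with $c\,\mathcal{R}\,d$, and from $a=(ab^{-1})b=c^{-1}db$ deduce $ca=(cc^{-1})db=db$; the same idempotent bookkeeping gives $c^{-1}c=(ab^{-1})(ab^{-1})^{-1}=aa^{-1}$, so $ca\neq 0$ and $ca=db\in Sa\cap Sb$ witnesses $a\,\lambda\,b$. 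Assertion 4 follows by a similar device: writing $a^{-1}=c^{-1}d$ straightly gives $a=d^{-1}c$, hence $da=(dd^{-1})c=(cc^{-1})c=c\neq 0$, so $Sa\neq 0$.

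Finally, assertions 5 and 6 combine assertion 4 with Lemma~\ref{conc}. For $\mathcal{R}\cap(S\times S)\subseteq\mathcal{R}^{*}$ I use the standard fact that the restriction of $\mathcal{R}$ from the oversemigroup $Q$ is contained in $\mathcal{R}^{*}$; for the reverse, given $a\,\mathcal{R}^{*}\,b$ I choose $s$ with $sa\neq 0$ (assertion 4), observe $sb\neq 0$ by Lemma~\ref{conc}(i), and read off $aa^{-1}=s^{-1}s=bb^{-1}$, i.e. $a\,\mathcal{R}\,b$. For assertion 6, from $a\,\rho\,b$ I take $as=bt\neq 0$, and Lemma~\ref{conc}(ii) yields $a\,\mathcal{R}^{*}\,as=bt\,\mathcal{R}^{*}\,b$, so $a\,\mathcal{R}^{*}\,b$. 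I expect the genuine obstacle to be the forward direction of assertion 2: the delicate point everywhere is not the algebra of inverses but verifying that the constructed elements (here $ca=db$, and the straight representatives in general) are actually nonzero, which is exactly where the primitive structure of $Q$ through Lemma~\ref{Gjhon} is indispensable.
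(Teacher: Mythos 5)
Your proof is correct, and much of it runs along the same lines as the paper's: parts (3), (4) and (5) are obtained essentially as in the paper, by reading off equalities of idempotents from nonvanishing products via Lemma~\ref{Gjhon} and then using Lemma~\ref{conc}. Three steps are handled genuinely differently. For part (1) the paper passes to Brandt coordinates (an element $(i,a,j)$ with $i\neq j$ squares to zero), whereas you argue coordinate-free: assuming $0\notin S$ forces $a^{2},b^{2},ab\neq 0$ and hence $aa^{-1}=a^{-1}a=bb^{-1}$, contradicting $(aa^{-1})(bb^{-1})=0$; this avoids any appeal to the structure theorem for primitive inverse semigroups. For the inclusion $\mathcal{L}\cap(S\times S)\subseteq\lambda$ in part (2) the paper writes $a=x^{-1}yb$ directly from $a\,\mathcal{L}\,b$ and premultiplies by $x$, while you decompose $ab^{-1}=c^{-1}d$ straightly and verify $ca=db\neq 0$; both demand the same care about nonvanishing, and yours has the small cost of depending on part (3) first. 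For part (6) the paper deduces $a\,\mathcal{R}\,b$ in $Q$ from $ax=by\neq 0$ and then invokes part (5), whereas you stay entirely inside $S$ and apply Lemma~\ref{conc}(ii) together with the inherited categoricity and $0$-cancellativity; this is slightly more self-contained but needs part (1) so that $S$ actually has a zero. One point you should make explicit: the ``if'' direction of your nonzero-product criterion (that $x^{-1}x=yy^{-1}$ implies $xy\neq 0$), which you use to get $ab^{-1}\neq 0$ in part (2), is not literally contained in Lemma~\ref{Gjhon}; it follows from categoricity at $0$, since $x(x^{-1}x)=x\neq 0$ and $(yy^{-1})y=y\neq 0$ give $x(x^{-1}x)y=xy\neq 0$.
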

\begin{proof}$1)$ \ If  $S \subseteq H_{e}$ for some  $0 \neq e \in E(Q)$, then  $0=a^{-1}b \in H_{e}$  which is a contradiction, so  $S\not\subseteq H_{e}$. Thus  $ 0\in S$  or there exist  $(i,a,j) \in S,  i \neq j$. Then  $(i,a,j)(i,a,j)=0$, so that $0 \in S$.\\
\\
$2)$\ If  $ a\,\lambda\,b$, then  $a=b=0$ and certainly   $ a\,\mathcal{L}\,b $  in  $Q$, or  $xa=yb\neq 0$  for some  $ x , y \in S$. In the latter case,  $a=x^{-1}yb,  b=y^{-1}xa$, so that $ a\,\mathcal{L}\,b $  in  $Q$. Conversely, if  $ a\,\mathcal{L}\,b $  in  $Q$, then either  $a=b=0$, or $a\neq 0$ and  $a=x^{-1}yb $  for some  $x^{-1}y \in Q$  where  $ x , y \in S$. Then  $ xa=yb\neq 0$. Hence  $a\,\lambda\,b $  in  $S$. It is worth pointing out that in this case  $\lambda$ is transitive. Moreover, it is an equivalence, and  $\{0\}$ is a  $\lambda$-class.\\
\\
$3)$ \  Suppose that  $0 \neq q \in Q$, then  $q=a^{-1}b$  for some  $a ,b \in S$. Since  $a^{-1}b\neq 0$, Lemma ~\ref{Gjhon} gives $aa^{-1}=bb^{-1}$ so that $a\,\mathcal{R}\,b$ in $Q$. By categoricity at  0  in  $Q$ and  Lemma~\ref{Gjhon},  $0 \neq aa^{-1}b=b$ and  $0 \neq a^{-1}bb^{-1}=a^{-1}$, then  $aa^{-1}=aa^{-1}bb^{-1}=bb^{-1}$ in  $Q$. Thus  $a\,\mathcal{R}\,b$  in  $Q$.\\
\\ 
4) \ Let  $a=x^{-1}y\neq 0$  for some  $x , y \in S$, where  $x\, \mathcal{R}\,y $ in  $Q$. By categoricity at  0  and Lemma~\ref{Gjhon}   we have  $xa=y\neq 0$. So  $Sa \neq 0.$ \\
\\ 
5)\ It is clear that  $\mathcal{R}\cap(S\times S)\subseteq \mathcal{R}^{*}$. To show that  $\mathcal{R^*} \subseteq \mathcal{R}\cap(S\times S)$. Let  $ a\,\mathcal{R}^{*}\,b $ \ in \ $S$; from (4) there exist  $y$  in  $S$  such that  $ya\neq 0$, by Lemma~\ref{conc}  $yb\neq 0$. Then by Lemma~\ref{Gjhon} we have \ $aa^{-1}=y^{-1}y=bb^{-1}$  and we get  $ a\,\mathcal{R}\,b $  in  $Q$. \\
 \\
6) \ Suppose that \ $a\,\rho\,b$\ in \ $S$, then \ $a=b=0$\ and \ $a\,\mathcal{R}\,b$\ in \ $Q$, or\ $ax=by\neq 0$\ for some \ $x,y \in S$. Then \ $b=axy^{-1} , a=byx^{-1}$, so that    \ $a\,\mathcal{R}\,b$\ in \ $Q$. By (5)   \ $a\,\mathcal{R^{*}}\,b$\ in \ $S$. 

 \end{proof}

By Lemma~\ref{Gjhon} and Proposition~\ref{Inthenameofgod} the following corollaries are clear.

\begin{corollary}\label{pslio}Let  $S$ be a left I-order in a primitive inverse semigroup  $Q$. If  $a^{-1}b\neq 0$, then  $a\,\mathcal{R}\,b$. Consequently,  $a^{-1}\,\mathcal{R}\,a^{-1}b\,\mathcal{L}\,b$.\end{corollary}

\begin{corollary}\label{prop}
Let  $S$ be a left I-order in a primitive inverse semigroup  $Q$. Let  $a^{-1}b , c^{-1}d$ be non-zero elements of \ $Q$\ where  $a,b,c$ and  $d$ are in  $S$. Then \\
$1)$\ $a^{-1}b\,\mathcal{R}\,c^{-1}d$  if and only if  $a\,\lambda\,c$; \\
$2)$ \ $a^{-1}b\,\mathcal{L}\,c^{-1}d$  if and only if   $b\,\lambda\,d$. \end{corollary}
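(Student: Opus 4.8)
The plan is to reduce both equivalences to the standard description of Green's relations in an inverse semigroup — namely that for $p,q \in Q$ one has $p\,\mathcal{R}\,q$ iff $pp^{-1}=qq^{-1}$ and $p\,\mathcal{L}\,q$ iff $p^{-1}p=q^{-1}q$ — and then to translate the resulting idempotent equalities back into $S$ by means of Proposition~\ref{Inthenameofgod}(2).

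First I would compute the idempotents $qq^{-1}$ and $q^{-1}q$ attached to $q=a^{-1}b$. Writing $q^{-1}=b^{-1}a$, we get $qq^{-1}=a^{-1}(bb^{-1})a$. Since $a^{-1}b\neq 0$, Corollary~\ref{pslio} gives $a\,\mathcal{R}\,b$ in $Q$, that is $aa^{-1}=bb^{-1}$; substituting this and using that $a^{-1}a$ is idempotent yields $qq^{-1}=a^{-1}(aa^{-1})a=a^{-1}a$. A dual computation gives $q^{-1}q=b^{-1}b$. Applying the same to $p=c^{-1}d$ produces $pp^{-1}=c^{-1}c$ and $p^{-1}p=d^{-1}d$.

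For $(1)$, these computations show that $a^{-1}b\,\mathcal{R}\,c^{-1}d$ holds if and only if $a^{-1}a=c^{-1}c$, which is precisely the condition $a\,\mathcal{L}\,c$ in $Q$. Since $a^{-1}b\neq 0$ forces $a\neq 0$ (and likewise $c\neq 0$), with $a,c\in S$, Proposition~\ref{Inthenameofgod}(2) identifies $a\,\mathcal{L}\,c$ in $Q$ with $a\,\lambda\,c$ in $S$, giving the equivalence. Part $(2)$ is entirely dual: using $q^{-1}q=b^{-1}b$ and $p^{-1}p=d^{-1}d$ one gets $a^{-1}b\,\mathcal{L}\,c^{-1}d$ iff $b^{-1}b=d^{-1}d$ iff $b\,\mathcal{L}\,d$ in $Q$, and Proposition~\ref{Inthenameofgod}(2) again passes this to $b\,\lambda\,d$ in $S$.

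I do not expect a genuine obstacle here; the only step requiring care is the reduction of $qq^{-1}$ to $a^{-1}a$, which depends crucially on the equality $aa^{-1}=bb^{-1}$ supplied by Corollary~\ref{pslio} from the hypothesis $a^{-1}b\neq 0$. Once that simplification is secured, both parts follow immediately from the inverse-semigroup characterisation of $\mathcal{R}$ and $\mathcal{L}$ together with a single application of $\mathcal{L}\cap(S\times S)=\lambda$.
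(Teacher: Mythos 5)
Your proof is correct and follows essentially the same route as the paper: both arguments use Corollary~\ref{pslio} (via $aa^{-1}=bb^{-1}$) to see that $a^{-1}b$ is $\mathcal{R}$-related to $a^{-1}$ (equivalently, that $(a^{-1}b)(a^{-1}b)^{-1}=a^{-1}a$), reduce the claim to $a\,\mathcal{L}\,c$ in $Q$, and finish with Proposition~\ref{Inthenameofgod}(2). You simply make explicit the idempotent computations that the paper leaves implicit.
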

\begin{proof} $1)$ \ We have that  $a^{-1}b\,\mathcal{R}\,c^{-1}d$ if and only if  $a^{-1}\,\mathcal{R}\,c^{-1}$ if and only if  $a\,\mathcal{L}\,c$. By Proposition~\ref{Inthenameofgod} this is equivalent to  \ $a\,\lambda\,c$. \\
$2)$\ Similar.\end{proof}

   \section{The main theorem}
   \label{main}

  The aim of this section is to prove the following theorem;
\begin{theorem}\label{maintheorem}
A semigroup  $S$ is a left I-order in a primitive inverse semigroup  $Q$  \textit{if and only if }  $S$  satisfies the following conditions:
\\
$(A)$ \ $S$  is categorical at  $0$; \\
$(B)$ \ $S$  is 0-cancellative;\\
$(C)$ \ $\lambda$  is transitive; \\
$(D)$ \ $Sa\neq 0$  for all  $a\in S$. \end{theorem}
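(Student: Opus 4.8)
The plan is to dispatch necessity in a sentence and then put all the effort into constructing $Q$ for the sufficiency direction.

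For the ``only if'' direction I would simply read off $(A)$--$(D)$ from the preliminary results. Conditions $(A)$ and $(B)$ hold because $Q$ is categorical at $0$ and $0$-cancellative (Lemma~\ref{Gjhon} and the remark following it), and both properties pass to the subsemigroup $S$ since products in $S$ agree with those in $Q$; $(C)$ is the transitivity of $\lambda$ already observed inside the proof of Proposition~\ref{Inthenameofgod}(2); and $(D)$ is exactly Proposition~\ref{Inthenameofgod}(4). So this half needs no new argument.

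Now assume $(A)$--$(D)$; the target is to build a primitive inverse semigroup from the would-be fractions $a^{-1}b$. I would first record that $\lambda$ is an equivalence relation: reflexivity comes from $(D)$, symmetry is clear, and transitivity is $(C)$. The technical engine is the observation that nonzero left multiples of a fixed element all lie in one $\lambda$-class: if $xa\neq 0$ then, applying $(D)$ to $xa$, some $w(xa)=(wx)a\neq 0$ witnesses $S(xa)\cap Sa\neq 0$, so $xa\,\lambda\,a$; hence $xa,ya\neq 0$ forces $xa\,\lambda\,ya$, and then $0$-cancellativity $(B)$ gives $x\,\lambda\,y$. Together with Lemma~\ref{conc}, this is what makes all the later identifications go through.

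Next I would set $\Sigma=\{(a,b):a,b\in S^{*},\ a\,\mathcal{R}^{*}\,b\}$ and declare $(a,b)\approx(c,d)$ if and only if there are $p,q\in S^{1}$ with $pa=qc\neq 0$ and $pb=qd$; this is engineered to encode $a^{-1}b=c^{-1}d$. Put $Q=(\Sigma/{\approx})\cup\{0\}$, with $[a,b]^{-1}=[b,a]$, and define $[a,b][c,d]=0$ unless $b\,\lambda\,c$, in which case, choosing $u,v\in S$ with $ub=vc\neq 0$, set $[a,b][c,d]=[ua,vd]$ (here $ua,vd\neq 0$ and $ua\,\mathcal{R}^{*}\,vd$ follow from Lemma~\ref{conc}). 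I would then verify, in order: that $\approx$ is an equivalence, where transitivity aligns two witnessing pairs using the key lemma above and $(B)$; that the product is independent both of the chosen representatives and of the common multiple $u,v$; that it is associative; that the nonzero idempotents are exactly the classes $[a,a]$, with $[a,a]\approx[b,b]$ iff $a\,\lambda\,b$, and that they commute, so $Q$ is inverse; and that distinct nonzero idempotents have zero product, whence every idempotent is primitive.

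Finally I would embed $S$ by $\phi(0)=0$ and $\phi(a)=[p,pa]$ for any $p$ with $pa\neq 0$ (which exists by $(D)$), the key lemma making this independent of $p$. A short computation using $(A)$ for the zero products and $(B)$ for injectivity shows $\phi$ is an injective homomorphism, and choosing the \emph{same} $p$ for an $\mathcal{R}^{*}$-related pair $a,b$ (so $pb\neq 0$ by Lemma~\ref{conc}) gives $\phi(a)^{-1}\phi(b)=[a,b]$ with $\phi(a)\,\mathcal{R}\,\phi(b)$, exhibiting $\phi(S)$ as a straight left I-order in $Q$. I expect the main obstacle to be the bookkeeping of the previous paragraph, namely that the multiplication is well defined and associative: every such check reduces to reconciling two different common left multiples, and it is precisely there that categoricity $(A)$ and $0$-cancellativity $(B)$ have to be invoked repeatedly.
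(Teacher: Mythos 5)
Your proposal is correct and follows essentially the same route as the paper: the same set $\Sigma$ of $\mathcal{R}^*$-related pairs, the same equivalence encoding $a^{-1}b=c^{-1}d$, the same multiplication via common left multiples of $\lambda$-related middle terms, and the same embedding $a\mapsto[p,pa]$. Your ``key lemma'' that nonzero left multiples of a fixed element are all $\lambda$-related is exactly the device the paper uses (together with its Lemma~\ref{thesamelemma}) to push through the well-definedness and associativity checks.
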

\begin{proof}Suppose that  $Q$ is exists, then  $S$  inherits Conditions (A) and (B) from  $Q$. By Proposition~\ref{Inthenameofgod} first we have that Conditions (C) and (D) hold.\\

Conversely, suppose that  $S$ satisfies Conditions (A)-(D). Our aim now is to construct a semigroup  $Q$ in which if  $S$ is embedded as a left I-order in  $Q$. We remark that from (C), $\lambda$ is an equivalence and from the definition of $\lambda$, $\{0\}$ is a $\lambda$-class. Let  \[\Sigma =\{(a,b) \in S \times S : a\,\mathcal{R}^{*}\,b\},\] and 
\[\Sigma^* =\{(a,b) \in \Sigma: a,b\neq 0\}.\]
On  $\Sigma$ define $\sim$  as follows; \\
\ $(a,b)\sim(c,d) \Longleftrightarrow  \ a=b=c=d=0$, or there exist   $x , y \in S^{*}$  such that  \[ xa=yc \neq 0 , \ xb=yd \neq 0. \]
\begin{lemma}\label{equivalence}
$ \sim$ is an equivalence. \end{lemma}
\begin{proof}
It is clear that  $\sim$  is  symmetric. If  $(a,b)\in \Sigma^*$, by (D) there exist  $h\in S$ such that  $ha\neq 0$\ and hence as $a\,\mathcal{R^*}\,b$, $hb\neq 0$ and so that  $\sim$ is reflexive.  Let \[(a,b) \sim (c,d)\sim (p,q),\] where   $(a,b),(c,d)$ and  $(p,q)$ in  $\Sigma^*$.  Then there exist  $ x , y ,\bar{x} , \bar{y} $  such that  \[ xa=yc \neq 0  ,  xb=yd \neq 0  \quad  \mbox{and}  \quad \bar{x}c=\bar{y}p \neq 0  , \bar{x}d=\bar{y}q \neq 0.\]  To show that  $\sim$ is transitive, we have to show that, there are  $z , \bar{z} \in S$ such that  $ za=\bar{z}p \neq 0  , zb=\bar{z}q \neq 0$. \\ 
\\
Now,  $yc\,\lambda\,\bar{x}c$. For  $Sc \neq 0$  and  $Syc \neq 0$   and clearly  $ Sc \cap Syc \neq 0$, so that  $ c\,\lambda\,yc $. Similarly,  $\bar{x}c\,\lambda\,c$; since  $\lambda$  is transitive, we obtain  $yc\,\lambda\,\bar{x}c$. Hence 
$wyc=\bar{w}\bar{x}c \neq 0$. Thus  
$wxa=wyc=\bar{w}\bar{x}c=\bar{w}\bar{y}p\neq 0$, that is,  $wxa=\bar{w}\bar{y}p \neq 0$. As  $c\,\mathcal{R^*}\,d$ we have that  $wyd=\bar{w}\bar{x}d\neq 0$ so that similarly,  $wxb=\bar{w}\bar{y}q \neq 0$ as required. \end{proof}

Let  $[a,b]$\ denote the  $\sim$-equivalence class of  $(a,b)$. We stress that  $[0,0]$ contains only the pair  $(0,0)$. On  $Q=\Sigma/\sim$  define a multiplication as:
\begin{displaymath}
[a,b][c,d] = \left\{ \begin{array}{ll}
[xa,yd] & \textrm{if $b\,\lambda\,c$ \; \mbox{and} \;$xb=yc\neq 0$}\\
 \ 0 & \textrm{else}
\end{array} \right.\end{displaymath}
and \ $0[a,b]=[a,b]0=00=0$, where \ $0=[0,0]$. We put $Q^*=Q\setminus\{[0,0]\}$\\
\\
Before we show that the above multiplication is well-defined we can notice easily that  $ [xa,yd] \in Q$. For   $ xa\,\mathcal{R}^{*}\,xb=yc\,\mathcal{R}^{*}\,yd$.
\begin{lemma}\label{welldefined}
The multiplication is well-defined. \end{lemma}
\begin{proof}Suppose that  $ [a_{1},b_{1}]=[a_{2},b_{2}], [c_{1},d_{1}]=[c_{2},d_{2}]$ are in  $Q^{*}$. Then there are elements  $x_{1} , x_{2} , y_{1} , y_{2}$ in  $S$ such that
\[\begin{array}{rcl} x_{1}a_{1}=x_{2}a_{2}\neq 0,\\
x_{1}b_{1}=x_{2}b_{2}\neq 0,\\
y_{1}c_{1}=y_{2}c_{2}\neq 0, \\ y_{1}d_{1}=y_{2}d_{2}\neq 0.\end{array}\]
Now, 
\begin{displaymath}
[a_{1},b_{1}][c_{1},d_{1}] = \left\{ \begin{array}{ll}
[wa_{1},\bar{w}d_{1}] & \textrm{if \; $b_{1}\,\lambda\,c_{1}$} \; \mbox{and} \;wb_{1}=\bar{w}c_{1}\neq 0\\
0 & \textrm{else}
\end{array} \right.
\end{displaymath}
and
\begin{displaymath}
[a_{2},b_{2}][c_{2},d_{2}] = \left\{ \begin{array}{ll}
[za_{2},\bar{z}d_{2}] & \textrm{if \; $b_{2}\,\lambda\,c_{2}$} \; \mbox{and} \; zb_{2}=\bar{z}c_{2}\neq 0\\
0 & \textrm{else}.
\end{array} \right.
\end{displaymath}
Notice that as  $b_1\,\lambda\,b_2$ and  $c_1\,\lambda\,c_2$, we have that  $b_1\,\lambda\,c_1$ if and only if  $b_2\,\lambda\,c_2$. Hence  $[a_1,b_1][c_1,d_1]=0$ if and only if  $[a_2,b_2][c_2,d_2]=0$. We now assume that  $b_1\,\lambda\,c_1$ (and so  $b_2\,\lambda\,c_2$ also).\\
\\
 We have to prove that  $[wa_{1},\bar{w}d_{1}]=[za_{2},\bar{z}d_{2}]$ that is, \[xwa_{1}=yza_{2} \neq 0, \quad  x\bar{w}d_{1}=y\bar{z}d_{2}\neq 0, \; \mbox{ for some} \;  x , y \in S. \]
Since  $wb_{1}=\bar{w}c_{1}\neq 0  , zb_{2}=\bar{z}c_{2} \neq 0 $ and  $a_1\,\mathcal{R^*}\,b_1, a_2\,\mathcal{R^*}\,b_1$ we have\ $wa_{1} \neq 0$\ and \ $za_{2} \neq 0$. Hence 
\[ wa_{1}\, \lambda\,a_{1}\, \lambda\,x_{1}a_{1}=x_{2}a_{2} \,\lambda\,a_{2}\, \lambda\,za_{2}.\] 
By (C)  $wa_{1}\,\lambda\,za_{2}$, that is,  $xwa_{1}=yza_{2} \neq 0$ for some  $x , y \in S$.\\
\\
The following lemma is essentially Lemma 4.8 in \cite{fountain}. We give it for completeness.
\begin{lemma}\label{thesamelemma}
Let  $a,b,c,d,s,t,x,y$ be non-zero elements of  $S$ which satisfy
\[sa=tc \neq 0 , \ sb=td \neq 0 , \ xa=yc \neq 0 .\] Then \ $ xb=yd \neq 0$.\end{lemma}
\begin{proof}
Since  $sa\neq 0 , xa\neq 0$ and  $ sa\,\lambda\,a\,\lambda\,xa$, then  $sa\,\lambda\,xa$ that is, there are elements \ $w,z \in S$ such that  $zsa=wxa\neq 0$. Since  $S$ is 0-cancellative  and $a\neq 0$ we have  $zs=wx\neq 0$. Thus by categoricity at 0  \[ztc=zsa=wxa=wyc\neq 0.\] Cancelling  $c$ gives  $zt=wy\neq 0$. By categoricity at 0 again we have \[wxb =zsb =ztd =wyd\neq 0.\]Hence  $xb=yd\neq 0$.\end{proof}
We can apply it as follows: since  $ x_{1}a_{1}=x_{2}a_{2} \neq 0  , x_{1}b_{1}=x_{1}b_{2} \neq 0$ and 
    $xwa_{1}=yza_{2}$, then  $ xwb_{1}=yzb_{2} \neq 0$.  Now,  $wb_{1}=\bar{w}c_{1}  , zb_{2}=\bar{z}c_{2}$, then  \[x\bar{w}c_{1}=xwb_1=yzb_2= y\bar{z}c_{2} \neq 0.\]  
Reapply the same lemma to get  $x\bar{w}d_{1}=y\bar{z}d_{2}\neq 0$. \end{proof}
\begin{lemma}\label{assocciative}
The multiplication is associative. \end{lemma}
\begin{proof}
Let  $[a,b],[c,d],[p,q]\in Q^{*}$ and set
\begin{displaymath} 
X=([a,b][c,d])[p,q] = \left\{ \begin{array}{ll}
[xa,yd][p,q] & \textrm{if $b\,\lambda\,c$ \; \mbox{and} \;$xb=yc\neq 0$}\\
0 & \textrm{else}
\end{array} \right.\end{displaymath} 
and
\begin{displaymath}
 Y=[a,b]([c,d][p,q]) = \left\{ \begin{array}{ll}
[a,b][\bar{x}c,\bar{y}q] & \textrm{if $d\,\lambda\,p$ \; \mbox{and} \;$\bar{x}d=\bar{y}p\neq 0$}\\
0 & \textrm{else.}
\end{array} \right.\end{displaymath}
Suppose that  $X = 0$. If  $b\not\hspace{.05mm}\lambda\,c$, then either  $d\not\hspace{.05mm}\lambda\,p$ (in which case  $Y=0$) or,  $d\,\lambda\,p$ and  $\bar{x}d=\bar{y}p\neq 0$, for some  $\bar{x},\bar{y}\in S$. Then  $\bar{x}c\neq 0$ and as  $c\,\lambda\,\bar{x}c,  b\not\hspace{.05mm}\lambda\,\bar{x}c$, so that again,  $Y=0$.\\
\\
On the other hand, if  $b\,\lambda\,c$ so that  $xb=yc\neq 0$ for some $x,y\in S$, and if  $yd\not\hspace{.05mm}\lambda\,p$, then  $d\not\hspace{.05mm}\lambda\,p$ so that  $Y=0$.\\
\\
Conversely, if  $Y=0$, then if  $d\not\hspace{.05mm}\lambda\,p$ we have either  $b\not\hspace{.05mm}\lambda\,c$ (which case  $X=0$)\ or \ $b\,\lambda\,c$ and  $yd\neq 0$. In this case, $p\not\hspace{.05mm}\lambda\,yd$, so that $X=0$. If $d\,\lambda\,p$, then we must have that that $b\not\hspace{.05mm}\lambda\,\bar{x}c$, so that  $b\not\hspace{.05mm}\lambda\,c$ and  again $X=0$. We therefore assume that  $X\neq 0$\ and \ $Y\neq  0$. Then
\[X=[xa,yd][p,q]=[sxa,rq] \ , syd=rp \neq 0 \]
\[Y=[a,b][\bar{x}c,\bar{y}q]=[\bar{s}a,\bar{r}\bar{y}q]  , \bar{s}b=\bar{r}\bar{x}c \neq 0.\]
for some $s,\bar{s} \in S$.\\
\\ 
We have to show that  $X=Y$  i.e. \[wsxa=\bar{w}\bar{s}a\neq 0  , wrq=\bar{w}\bar{r}\bar{y}q\neq 0\]   for some  $ w ,  \bar{w} \in S$. By 0-cancellativity this equivalent to  $ wsx=\bar{w}\bar{s}\neq 0 ,  wr=\bar{w}\bar{r}\bar{y}\neq 0$. 
Since  $ xb \neq 0 ,  sx \neq 0 $  and  $S$ categorical at  0 we have  $sxb \neq 0 \ $also$, \bar{s}b \neq 0$. Hence  $sxb\,\lambda\,\bar{s}b,$ and so there exist  $w,\bar{w} \in S$ such that   $wsxb=\bar{w}\bar{s}b\neq 0$. As   $S$ 0-cancellative, we have  $wsx=\bar{w}\bar{s}\neq 0$. \\
\\
Now, since  $ wsxb=\bar{w}\bar{s}b \neq 0$ and  $\bar{s}b=\bar{r}\bar{x}c\neq 0, xb=yc\neq 0$   we have    $wsyc=\bar{w}\bar{r}\bar{x}c \neq 0$. As  $S$ is 0-cancellative we have  $wsy=\bar{w}\bar{r}\bar{x}\neq 0$, then  $wsyd=\bar{w}\bar{r}\bar{x}d\neq 0$, but  $syd=rp \neq 0$  and  $\bar{x}d=\bar{y}p\neq 0$ so that  $wrp=\bar{w}\bar{r}\bar{y}p \neq 0$. Thus   $wr=\bar{w}\bar{r}\bar{y}\neq 0$ as required. \end{proof}

Let  $[a,b] \in Q^{*}$, by (D) for  $a \in S^*$   there exist  $x \in S$ such that  $xa\neq 0$. Clearly, $[xa,xb]\in Q^*$. There exist $t\in S$ with $txa\neq 0$, so $(tx)a=t(xa)\neq 0$ and as $a\,\mathcal{R^*}\,b$, $(tx)b=t(xb)\neq 0$. Then the following lemma is clear.
\begin{lemma}\label{weneedit}
If  $[a,b] ,  [xa,xb] \in Q^*$,  then  $[a,b]=[xa,xb]$.\end{lemma}
 If  $a \in S^*$, by (D) there exist $x\in S$ such that  $xa \neq 0$. From Lemma~\ref{conc}, we get  $x\,\mathcal{R}^{*}\,xa$. Hence  $[x,xa] \in Q^*$. 
 If  $(y,ya) \in \Sigma^*$, then as $xa\,\lambda\,ya$   there exist  $s,\acute{s} \in S$ with  $sxa=\acute{s}ya\neq 0$ and we have  $sx=\acute{s}y\neq 0$, that is,  $[x,xa]=[y,ya]$. Hence we have got the first part of the following lemma:\\ 
\begin{lemma}\label{embedding}
The mapping  $\theta: S \longrightarrow Q$ defined by   $0\theta=0$  and for  $a\in S^*, \\ a\theta =[x,xa]$ where   $x \in S^*$ with  $xa\neq 0$, is an embedding of  $S$ into  $Q$.\end{lemma}
\begin{proof}If  $a,b\neq 0$ and 
$a\theta =b\theta$, that is,  $[x,xa]=[y,yb]$  for some  $ x , y \in S $  and  $ xa \neq 0 ,  yb \neq 0$, then there exist  $ w ,  \bar{w} \in S $  such that \[wx=\bar{w}y\neq 0 ,  wxa=\bar{w}yb\neq 0. \] Then, $wxa=wxb\neq 0$\ and as  $S$ is 0-cancellative we have  $a=b$ . Thus  $\theta$ is one-one.\\
\\
To show that  $\theta $ is a homomorphism, let  $a,b\in S^*$ and  $a\theta=[s,sa],b\theta=[t,tb]$  where  $sa\neq 0$ and  $tb\neq 0$. \\
\\
Suppose that  $ab=0$. If  $sa\,\lambda\,t$, then  $usa=vt\neq 0$ for some  $u,v \in S$. By categoricity at 0, $usab=vtb\neq 0$, a contradiction. Hence  $sa\,\not\hspace{.05mm}\lambda\,t$ and  $a\theta b\theta=0=(ab)\theta$.\\
 \\
 Assume therefore that  $ab\neq 0$. Let $(ab)\theta=[x,xab]$ where  $xab\neq 0$. By categoricity at 0,  $sab\neq 0$. Hence  $sab\,\lambda\,b\,\lambda\,tb$, so that  $wsab=\bar{w}tb\neq 0$. \\
\\
 Since  $S$ is 0-cancellative  $wsa=\bar{w}t\neq 0$, that is,  $sa\,\lambda\,t$ and we have  $a\theta b\theta\neq 0$. Moreover from  $xa\neq 0$ and  $sa\neq 0$ we have  $sa\,\lambda\,xa$, that is, there exist  $m,n\in S$ such that  $msa=nxa\neq 0$. By cancelling  $a$ we have  $ms=nx\neq 0$ and by categoricity at 0,  $msab=nxab\neq 0$. Thus \[\begin{array}{rcl}a\theta b\theta&=&[s,sa][t,tb]\\ &=&[ws,\bar{w}tb]   \\ &=&[ws,wsab]\\ &=&[s,sab] \quad \mbox{by Lemma~\ref{weneedit} }\\ &=& [x,xab]\\&=&(ab)\theta.\end{array}\]
\end{proof} 
\begin{lemma}\label{regular}
The semigroup  $Q$  is regular. \end{lemma} 
\begin{proof}
Let  $ [a,b] \in Q^{*}$, then since  $[b,a] \in Q^{*}$  we get 

\[\begin{array}{rcl} [a,b][b,a][a,b]&=&[xa,xa][a,b]\quad \mbox{for some}\; x\in S \; \mbox{with} \; xb\neq 0\\
&=&[a,a][a,b]\quad \mbox{by Lemma~\ref{weneedit} }\\
&=&[ya,yb]\quad \mbox{for some}\quad y\in S \; \mbox{with} \; ya\neq 0\\ 
&=&[a,b]\quad \mbox{by Lemma~\ref{weneedit} }.\end{array}\]\end{proof}

For any  $a\in S^{*}$ we have  $[a,a] \in Q^{*}$ and  $[a,a][a,a]=[xa,xa]=[a,a]$ by Lemma~\ref{weneedit}, that is,  $[a,a]$ is idempotent. The next lemma describe the form of  $E(Q)$.
\begin{lemma}\label{idempotents}
\ $E(Q)=\{[a,a], \; a \in S^{*}\}\cup\{0\}$ and forms a semilattice. \end{lemma}
\begin{proof}
Let  $ [a,b] \in E(Q^*)$, then  $ [a,b][a,b]=[a,b]$. Hence  $[xa,yb]=[a,b]$  where  $ xb=ya \neq 0$ . Thus there exists  $  t ,  r \in S^*$  such that  $ txa=ra \neq 0 ,  tyb=rb \neq 0 $. By (B),   $ tx=ty=r \neq 0$, then  $ x=y$ \ and  \ $ a=b$.\\
\\
For  $[a,a],[b,b] \in E(Q^*)$ we have  $[b,b][a,a]=0 \Longleftrightarrow \; a\,\not\hspace{.05mm}\lambda\,b \; \Longleftrightarrow \; [a,a][b,b]=0$ and if  $a\,\lambda\,b$, then  
\[\begin{array}{rcl} 
[a,a][b,b]&=&[\acute{s}a,\acute{t}b]  \quad \mbox{where} \quad \acute{s}a=\acute{t}b\neq 0\; \mbox{for some} \; \acute{s} , \acute{t} \in S  \\
&=&[\acute{t}b,\acute{s}a] \quad \mbox{where} \quad \acute{s}a=\acute{t}b\neq 0 \\
&=&[b,b][a,a].\end{array}\]
 
\end{proof}

We can note easily that  $[b,a]$ is unique, which means that it is the inverse of \ $[a,b]$. \\

\begin{lemma}\label{primitive}
The semigroup  $Q$  is primitive.\end{lemma}
\begin{proof}
Suppose that  $ [a,a] ,  [b,b] \in E(Q^*)$  are such that  $ [a,a]\leq [b,b]$. Then 
\[\begin{array}{rcl} [a,a]&=&[a,a][b,b]\\
&=&[xa,yb] \quad \mbox{for some} \; x,y \in S \; \mbox{where}\ xa=yb\\
&=&[yb,yb]\\ 
&=&[b,b]\quad \mbox{by Lemma~\ref{weneedit} }.  \end{array}\]
\end{proof}

By Lemma~\ref{embedding}  we can regard  $S$ as a subsemigroup of  $Q$. Let  $[a,b]\in Q^*$ and  $a\theta=[x,xa], b\theta=[y,yb]$ where  $xa\neq 0$ and  $yb\neq 0$. Hence \[\begin{array}{rcl} (a\theta)^{-1}(b\theta)&=&[x,xa]^{-1}[y,yb]\\ &=&[xa,x][y,yb]\\ &=&[txa,ryb]\quad \mbox{for some}\; t,r \in S\; \mbox{where}\; tx=ry\neq 0 \\ &=&[txa,txb]\\ &=&[a,b]\quad \mbox{by Lemma ~\ref{weneedit}}.\end{array}\]
Hence   $S$ is a left I-order in  $Q$.
\end{proof}

It is worth pointing out that if  $e\in E(Q^*)$, then  $e=a^{-1}a$ for some  $a\in S^*$. For  $e=a^{-1}b \in E(Q^*)$ as  $a\,\mathcal{R}\,b$ we have  $b=ae$ and  $a=be$. Then it is clear that  $a=b$. \\
\\
A \emph{Brandt semigroup} is a completely 0-simple inverse semigroup. By Theorem II.3.5 in \cite{inverse}  every Brandt semigroup is isomorphic to  $B(G,I)$ for some group  $G$ and non-empty set  $I$ where  $B(G,I)$ is constructed as follows:\\ As a set  $B(G,I)=(I \times G \times I)\cup \{0\},$ the binary operation is defined by

\begin{displaymath}
 (i,a,j)(k,b,l)= \left\{ \begin{array}{ll}
(i,ab,l) & \textrm{if \ $j= k$}\\
0 &  \textrm{else}
\end{array} \right. 
\end{displaymath} 
\[(i,a,j)0=0(i,a,j)=00=0.\]
Note that every Brandt semigroup is a primitive inverse semigroup.

\begin{example}\label{ex:brandt}\cite{GG} Let $H$ be a left order in a group $G$, and let $\mathcal{B}^0=\mathcal{B}^0(G,I)$ be a Brandt semigroup over  $G$
where $|I|\geq 2$. Fix $i\in I$ and let \[S_i=\{ (i,h,j):h\in H, j\in I\}\cup \{ 0\}.\] Then $S_i$ is a straight left I-order in $\mathcal{B}^0$.

To see this, notice that $S_i$ is a subsemigroup, $0=0^{-1}0$, and for any
$(j,g,k)\in \mathcal{B}^0$, we may write $g=a^{-1}b$ where $a,b\in H$ and then
\[(j,g,k)=(i,a,j)^{-1}(i,b,k)\]
where $(i,a,j), (i,b,k)\in S_i$. 

Again, it is easy to see that $S_i$ is not a left order in $\mathcal{B}^0$.
\end{example}

Let  $\{S_{i} : i \in I \}$ be  a family of disjoint semigroups with zero, and put \\ $S^*_i=S\setminus\{0\}$. Let  $ S=\bigcup_{i\in I} S^*_i\cup 0$  with the multiplication 
 \begin{displaymath}
a*b = \left\{ \begin{array}{ll}
ab \ \mbox{if} \ a, b \in S_{i} \ \mbox{for some} \  i \ \mbox{and} \;  ab\neq 0 \quad \mbox{in} \ S_{i}\\
\quad 0 \hspace{2cm} \mbox{else.}
\end{array} \right.\end{displaymath} 
 With this multiplication  $S$ is a semigroup called \emph{0-direct union} of the $S_{i}$. In \cite{clifford}, it is shown that every primitive inverse semigroup with zero is a 0-direct union of Brandt semigroups.
\begin{corollary}\label{maincoro}
A semigroup  $S$  is a left I-order in a Brandt semigroup  $Q$ if and only if  $S$ satisfies that conditions in Theorem ~\ref{maintheorem}  and in addition, 
for all  $a , b \in S^*$  there exist  $c , d \in S$  such that $ca\,\mathcal{R^{*}}\,d\,\lambda\,b$.
 \end{corollary}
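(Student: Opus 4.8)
The plan is to reduce the statement to a single structural fact: a primitive inverse semigroup is a Brandt semigroup precisely when it has exactly one nonzero $\mathcal{D}$-class. Indeed, since every primitive inverse semigroup is a $0$-direct union of Brandt semigroups and the components are exactly the sets $D^*\cup\{0\}$ arising from the nonzero $\mathcal{D}$-classes $D^*$, the inverse semigroup $Q$ of left I-quotients is Brandt if and only if $Q^*$ is a single $\mathcal{D}$-class. I will therefore show that the extra condition is exactly the translation, via Corollary~\ref{prop} and Proposition~\ref{Inthenameofgod}, of the assertion that any two nonzero elements of $Q$ are $\mathcal{D}$-related. Throughout I use $\mathcal{D}=\mathcal{R}\circ\mathcal{L}$, together with the fact that $c^{-1}c=aa^{-1}$ forces $ca\neq 0$ (because then $c^{-1}(ca)=(c^{-1}c)a=(aa^{-1})a=a\neq 0$), whose converse is Lemma~\ref{Gjhon}(v).

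For the direct implication, assume $Q$ is Brandt; then $S$ satisfies (A)--(D) by Theorem~\ref{maintheorem}, and any two nonzero elements of $Q$ lie in one $\mathcal{D}$-class. Given $a,b\in S^*$ I would write $a\,\mathcal{D}\,b$ as $a\,\mathcal{R}\,z\,\mathcal{L}\,b$ for some $z\in Q^*$, express $z=c^{-1}d$ as a straight quotient (Proposition~\ref{Inthenameofgod}(3)) with $c\,\mathcal{R}\,d$ in $Q$, and read off the witnesses $c,d\in S^*$. By Corollary~\ref{pslio} we get $a\,\mathcal{R}\,c^{-1}$ and $d\,\mathcal{L}\,b$, whence $aa^{-1}=c^{-1}c$ and $d\,\lambda\,b$ (Proposition~\ref{Inthenameofgod}(2)). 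The key computation is then $(ca)(ca)^{-1}=c(aa^{-1})c^{-1}=c(c^{-1}c)c^{-1}=cc^{-1}=dd^{-1}$, which, together with $ca\neq 0$ (forced by $c^{-1}c=aa^{-1}$), gives $ca\,\mathcal{R}\,d$ in $Q$ and hence $ca\,\mathcal{R}^*\,d$ in $S$ by Proposition~\ref{Inthenameofgod}(5). This yields $ca\,\mathcal{R}^*\,d\,\lambda\,b$ as required.

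For the converse, assume (A)--(D) together with the extra condition; by Theorem~\ref{maintheorem} $S$ is a left I-order in the primitive inverse semigroup $Q=\Sigma/\sim$ constructed there, and it remains to prove $Q$ is Brandt, i.e.\ that $Q^*$ is a single $\mathcal{D}$-class. It suffices to connect the nonzero idempotents $[a,a]$, $a\in S^*$, since every $[p,q]\in Q^*$ satisfies $[p,q]\,\mathcal{R}\,[p,p]$ by Corollary~\ref{prop}. Given $a,b\in S^*$, the extra condition furnishes $c,d\in S$ with $ca\,\mathcal{R}^*\,d\,\lambda\,b$; here $ca\neq 0$ forces $a\,\lambda\,ca$, using (D) to pick $s$ with $s(ca)\neq 0$ so that $(sc)a\in Sa\cap S(ca)$. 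Then $[ca,d]\in Q^*$ and, by Corollary~\ref{prop}, $[a,a]\,\mathcal{R}\,[ca,d]\,\mathcal{L}\,[b,b]$, giving $[a,a]\,\mathcal{D}\,[b,b]$. Hence all nonzero idempotents, and therefore all nonzero elements, of $Q$ are $\mathcal{D}$-related, so $Q$ is a Brandt semigroup.

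The main obstacle I anticipate is bookkeeping around nonzero-ness rather than anything deep: one must ensure throughout that the elements produced (notably $ca$ and $d$) are nonzero, so that the $\mathcal{R}^*$ and $\lambda$ relations carry genuine information and the idempotents $[ca,d],[a,a],[b,b]$ really lie in $Q^*$; here categoricity at $0$, $0$-cancellativity and condition (D) do the work. The only slightly subtle point is that the ``left-multiple'' form $ca\,\mathcal{R}^*\,d\,\lambda\,b$ of the condition agrees with the more symmetric statement that the $\lambda$-class of $a$ and that of $b$ are joined by an $\mathcal{R}^*$-pair; transitivity of $\lambda$ (condition (C)) and Lemma~\ref{conc} reconcile the two, which is what lets the forward and converse arguments invoke the condition in the form each needs.
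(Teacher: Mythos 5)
Your proposal is correct and follows essentially the same route as the paper: the forward direction produces the witnesses $c,d$ from a straight decomposition $q=c^{-1}d$ of an element with $a\,\mathcal{R}\,q\,\mathcal{L}\,b$ and translates $ca\,\mathcal{R}\,d\,\lambda\,b$ via Proposition~\ref{Inthenameofgod}, while the converse uses the extra condition to show any two nonzero elements of $Q$ are $\mathcal{D}$-related, so the $0$-direct union of Brandt semigroups has a single component. You merely supply more detail than the paper (the explicit computation giving $ca\,\mathcal{R}\,d$ and the reduction to idempotents in the converse), which is welcome but not a different argument.
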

 
\begin{proof}Suppose that  $S$  is a left I-order in $Q$  and let  $a,b \in S^{*}$. Since  $Q$ has a single non-zero $\mathcal{D}$-class,  there exists  $q\in Q$ such that  $a\,\mathcal{R}\,q\,\mathcal{L}\,b$ in  $Q$. Let \ $q=c^{-1}d$\ where  $c,d \in S$ and  $c\,\mathcal{R}\,d$. Then  $ca\,\mathcal{R}\,d$ and  $d\,\mathcal{L}\,c^{-1}d\,\mathcal{L}\,b$  that is,  $ca\,\mathcal{R}\,d\,\mathcal{L}\,b$. By Proposition~\ref{Inthenameofgod} $ca\,\mathcal{R^{*}}\,d\,\lambda\,b$.\\
\\
On the other hand, if  $S$ satisfies the given conditions, then we can show that  $Q$ is Brandt. For, if $q=a^{-1}b , p=c^{-1}d \in Q^*$, then  $b,d \in S^*$ so there exist $u,v\in S$ with $ub\,\mathcal{R^*}\,v\,\lambda\,d$. In $Q, ub\,\mathcal{R}\,v\,\mathcal{L}\,d$, so that $b$ and $d$ (and hence $q$ and $p$) lie in the same Brandt subsemigroup of $Q$.
\end{proof}

\begin{lemma}\label{directunion}
Let  $Q=\bigcup_{i\in I} Q_{i}$  be a primitive inverse semigroup where  $Q_{i}$  is a Brandt semigroup. If  $S$ is a left I-order in  $Q$, then  $S$  is a 0-direct union of semigeoups that are left I-orders in the Brandt semigroups  $Q_i$'s.\end{lemma}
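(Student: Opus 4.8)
The plan is to show that $S$ decomposes as a $0$-direct union by defining, for each $i\in I$, the set $S_i = (S\cap Q_i^*)\cup\{0\}$, and then verifying three things: that these sets partition $S^*$ together with $0$, that each $S_i$ is a subsemigroup of $S$ that is a left I-order in $Q_i$, and that the multiplication on $S$ is precisely the $0$-direct union multiplication of the $S_i$. First I would observe that since $Q=\bigcup_{i\in I}Q_i$ is a $0$-direct union of Brandt semigroups, every nonzero element of $Q$ lies in exactly one $Q_i^*$; restricting this partition to $S^*$ gives a decomposition $S^* = \bigcup_{i\in I}S_i^*$ where $S_i^* = S\cap Q_i^*$, and the $S_i^*$ are pairwise disjoint. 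By Proposition~\ref{Inthenameofgod}(1), $0\in S$, so this accounts for all of $S$.

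Next I would check that each $S_i = S_i^*\cup\{0\}$ is closed under the multiplication of $S$. If $a,b\in S_i^*$ and $ab\neq 0$ in $S$, then since the product is computed in $Q$ and both factors lie in the Brandt component $Q_i$, the product $ab$ again lies in $Q_i^*$ (the components $Q_i$ are subsemigroups of $Q$, being the $\mathcal{D}$-classes together with $0$ in the $0$-direct union), so $ab\in S_i^*$. Thus $S_i$ is a subsemigroup. Conversely, if $a\in S_i^*$ and $b\in S_j^*$ with $i\neq j$, then $a$ and $b$ lie in distinct Brandt components, so $ab=0$ in $Q$; this is exactly the rule defining the $0$-direct union, so the multiplication on $S$ coincides with that of the $0$-direct union of the $S_i$.

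To see that $S_i$ is a left I-order in $Q_i$, I would take any $q\in Q_i$. If $q=0$ there is nothing to prove, so suppose $q\in Q_i^*$. Since $S$ is a left I-order in $Q$, we may write $q=a^{-1}b$ with $a,b\in S$, and by Corollary~\ref{pslio} we have $a\,\mathcal{R}\,b$ in $Q$ with $a^{-1}\,\mathcal{R}\,q\,\mathcal{L}\,b$. The key point is that $\mathcal{R}$- and $\mathcal{L}$-related nonzero elements of a $0$-direct union lie in the same component: since $q\in Q_i^*$ and $q\,\mathcal{L}\,b$, we get $b\in Q_i^*$, and since $a\,\mathcal{R}\,b$ we likewise get $a\in Q_i^*$. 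Hence $a,b\in S_i^*$ and $q=a^{-1}b$ expresses $q$ as a left I-quotient of elements of $S_i$, so $S_i$ is a left I-order in $Q_i$.

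The main obstacle is the verification that each component $Q_i$ is genuinely a subsemigroup of $Q$ and that $\mathcal{R}$- and $\mathcal{L}$-classes of nonzero elements do not cross between components — this is precisely the structural content of the $0$-direct union, and it relies on Lemma~\ref{Gjhon}(i) (categoricity at $0$) to rule out nonzero products across components and on the fact that in a primitive inverse semigroup the nonzero $\mathcal{D}$-classes are the Brandt components. Once this component-preservation is in hand, the disjointness and the matching of multiplications are routine, and the left I-order property of each $S_i$ follows immediately from the argument above.
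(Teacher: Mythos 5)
Your proof is correct and complete; note that the paper itself states Lemma~\ref{directunion} without any proof, so there is no argument of the author's to compare against. Your decomposition $S_i=(S\cap Q_i^*)\cup\{0\}$ is the natural one, and the two essential points are exactly the ones you isolate: cross-component products vanish (so $S$ carries the $0$-direct union multiplication of the $S_i$), and for $q\in Q_i^*$ written as $q=a^{-1}b$, Corollary~\ref{pslio} together with the fact that the nonzero $\mathcal{D}$-classes of $Q$ are the $Q_i^*$ forces $a,b\in S_i^*$, which simultaneously shows each $S_i$ is nontrivial and is a left I-order in $Q_i$.
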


\section{Uniqueness}
\label{sec:UBrandt} \index{UBQuotients}
In this section we show that  a semigroup  $S$ has, up to isomomorphism,  at most one  primitive inverse semigroup of left I-quotients.

\begin{definition}\label{defn:lifting} \cite{GG} Let $S$ be a subsemigroup of $Q$ and let $\phi:S\rightarrow
P$ be a morphism from $S$ to a semigroup $P$. If there
is a morphism $\overline{\phi}:Q\rightarrow P$ such that
$\overline{\phi}|_S=\phi$, then we say that
$\phi$ {\em lifts to } $Q$. If $\phi$ lifts to an isomorphism, then
we say that $Q$ and $P$ are {\em isomorphic over $S$}.
\end{definition}
On a straight left I-order semigroup  $S$ in a semigroup  $Q$ we define an ternary relation  $\mathcal{T}^{Q}$ from\cite{GG} on $S$ as follows: \[(a,b,c) \in \mathcal{T}^{Q} \Longleftrightarrow ab^{-1}Q\subseteq c^{-1}Q . \]

Since every left I-order in a primitive inverse semigroup is straight, then we are able to use the following result.

\begin{corollary}\label{cor:iso}\cite{GG} Let $S$ be a straight left I-order
in $Q$ and let $\phi:S\rightarrow P$ be an embedding of $S$
into an inverse semigroup $P$ such that $S\phi$ is a straight left
I-order in $P$. Then $Q$ is isomorphic to $P$ over $S$ if and only if
for any $a,b,c\in S$:
$(i)$ $(a,b)\in \mathcal{R}^Q_S\Leftrightarrow (a\phi,b\phi)\in \mathcal{R}^P_{S\phi}$;
  and \\
$(ii)$ $(a,b,c)\in \mathcal{T}^Q_S\Leftrightarrow (a\phi,b\phi,c\phi)\in
\mathcal{T}^P_{S\phi}$.
\end{corollary}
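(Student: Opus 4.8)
The plan is to recognise this as the specialisation to straight left I-orders of the general isomorphism criterion of \cite{GG}, and to verify that its hypotheses are met here; concretely, I would produce the only reasonable candidate isomorphism and show that conditions $(i)$ and $(ii)$ are exactly what make it well defined and multiplicative. Since $S$ is straight, every $q\in Q$ may be written $q=a^{-1}b$ with $a,b\in S$ and $a\,\mathcal{R}\,b$ in $Q$, and similarly every element of $P$ is $(a\phi)^{-1}(b\phi)$ with $a\phi\,\mathcal{R}\,b\phi$. The candidate is
\[\overline{\phi}\colon Q\longrightarrow P,\qquad \overline{\phi}(a^{-1}b)=(a\phi)^{-1}(b\phi),\]
evaluated on straight representatives. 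Condition $(i)$ ensures that $a\,\mathcal{R}\,b$ forces $a\phi\,\mathcal{R}\,b\phi$, so the right-hand side is again a straight quotient and $\overline{\phi}$ genuinely maps into $P$; surjectivity then follows from the reverse implication in $(i)$ together with straightness of $S\phi$ in $P$.

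For the necessity direction the work is light: any isomorphism $\overline{\phi}\colon Q\to P$ with $\overline{\phi}|_S=\phi$ preserves Green's relation $\mathcal{R}$ and carries principal right ideals bijectively to principal right ideals while respecting inverses and products. Restricting the former to $S\times S$ gives $(i)$, and applying the latter to the defining inclusion $ab^{-1}Q\subseteq c^{-1}Q$ gives $(ii)$. The substance lies in sufficiency. Assuming $(i)$ and $(ii)$, the core step is to express both the \emph{equality} of two straight quotients $a^{-1}b=c^{-1}d$ and the \emph{outcome of a product} $(a^{-1}b)(c^{-1}d)$ purely in terms of the relations $\mathcal{R}^Q_S$ and $\mathcal{T}^Q_S$ on $S$, after passing to suitable common $\mathcal{R}$-related numerators and denominators. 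Granting such a description, well-definedness and the homomorphism property of $\overline{\phi}$ follow by transporting each relational statement from $Q$ to $P$ through the forward implications of $(i)$ and $(ii)$, and injectivity follows by transporting them back through the reverse implications. Finally $\overline{\phi}|_S=\phi$ is obtained by writing $a\in S^*$ as a straight quotient $x^{-1}(xa)$ with $xa\neq 0$ and observing that $(x\phi)^{-1}\big((xa)\phi\big)$ collapses to $a\phi$, which is one more instance of the same relational bookkeeping; by Definition~\ref{defn:lifting} this exhibits $Q$ and $P$ as isomorphic over $S$.

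I expect the genuine obstacle to be precisely that relational description of equality and multiplication, i.e.\ the structural lemma that the pair $(\mathcal{R}^Q_S,\mathcal{T}^Q_S)$ is a complete invariant of $Q$ over $S$. The relation $\mathcal{R}^Q_S$ alone only determines ranges up to the natural partial order, and it is the containment relation $\mathcal{T}^Q_S$, defined through principal right ideals, that pins down both this order and the result of composing quotients; this is exactly why both $(i)$ and $(ii)$ are required and neither suffices alone. Establishing this lemma in full generality is the content imported from \cite{GG}. In the present situation I would not reprove it from scratch but instead check its single standing hypothesis, namely that every left I-order in a primitive inverse semigroup is straight, which is Proposition~\ref{Inthenameofgod}(3); the criterion of \cite{GG} then applies verbatim and yields the corollary.
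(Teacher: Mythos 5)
This corollary is quoted verbatim from \cite{GG} and the paper supplies no proof of it, so there is nothing for your argument to diverge from; your outline (easy necessity via preservation of $\mathcal{R}$ and of principal right ideals under any isomorphism extending $\phi$, sufficiency via the relational description of equality and multiplication of straight quotients, with that structural lemma explicitly deferred to \cite{GG}) is consistent with how the cited result is established there. Your closing observation that the only thing the present paper must supply before invoking the criterion is straightness of left I-orders in primitive inverse semigroups, i.e.\ Proposition~\ref{Inthenameofgod}(3), matches exactly the sentence with which the paper introduces the corollary.
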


\begin{theorem}\label{thm:brandhoms} Let $S$ be a left I-order in a primitive inverse 
semigroup $Q$. If $\phi:S\rightarrow T$ is an isomorphism where
$T$ is a left I-order in a primitive inverse semigroup\ $P$, then
$\phi$ lifts to  an isomorphism $\overline{\phi}:Q\rightarrow P$
\end{theorem}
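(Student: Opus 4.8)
The plan is to verify the two conditions of Corollary~\ref{cor:iso} for the isomorphism $\phi:S\rightarrow T$, and then invoke that corollary to conclude that $\phi$ lifts to an isomorphism $\overline{\phi}:Q\rightarrow P$. Since $Q$ and $P$ are primitive inverse semigroups, every left I-order in them is straight by Proposition~\ref{Inthenameofgod}(3), so the hypotheses of Corollary~\ref{cor:iso} are met once I check that $T=S\phi$ is a straight left I-order in $P$ (which it is, again by straightness in the primitive case). It therefore suffices to show, for all $a,b,c\in S$, that $(a,b)\in\mathcal{R}^Q_S \Leftrightarrow (a\phi,b\phi)\in\mathcal{R}^P_T$ and that $(a,b,c)\in\mathcal{T}^Q_S \Leftrightarrow (a\phi,b\phi,c\phi)\in\mathcal{T}^P_T$.

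For the first condition, the key observation is that the relation $\mathcal{R}$ on $Q$, restricted to $S$, is an \emph{internal} relation expressible purely in terms of $S$. Indeed, by Proposition~\ref{Inthenameofgod}(5) we have $\mathcal{R}\cap(S\times S)=\mathcal{R}^*$, and $\mathcal{R}^*$ is defined by a first-order condition on $S$ alone (the cancellation property $xa=ya \Leftrightarrow xb=yb$). The same description holds in $P$ for $T$. Since $\phi$ is a semigroup isomorphism, it preserves $\mathcal{R}^*$: one has $a\,\mathcal{R}^*\,b$ in $S$ if and only if $a\phi\,\mathcal{R}^*\,b\phi$ in $T$. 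Hence $(a,b)\in\mathcal{R}^Q_S$ iff $a\,\mathcal{R}^*\,b$ in $S$ iff $a\phi\,\mathcal{R}^*\,b\phi$ in $T$ iff $(a\phi,b\phi)\in\mathcal{R}^P_T$, giving condition $(i)$.

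For the second condition, I would first find an intrinsic reformulation of $\mathcal{T}^Q$ in terms that transfer across $\phi$. Unwinding the definition $ab^{-1}Q\subseteq c^{-1}Q$ in a primitive inverse semigroup using Lemma~\ref{Gjhon} and Corollary~\ref{pslio}, the containment should reduce to a condition on the relevant idempotents $a^{-1}a,\ b^{-1}b,\ c^{-1}c$ and on the $\lambda$-relation among $a,b,c$ (via Corollary~\ref{prop}, which translates $\mathcal{R}$ and $\mathcal{L}$ in $Q$ into $\lambda$ in $S$). The aim is to express $(a,b,c)\in\mathcal{T}^Q_S$ entirely through $\mathcal{R}^*$ and $\lambda$ on $S$ — relations that $\phi$ manifestly preserves, since $\lambda$ is defined by the condition $Sa\cap Sb\neq 0$ which an isomorphism respects. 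Once $\mathcal{T}^Q_S$ is rewritten this way, its preservation under $\phi$ is immediate and condition $(ii)$ follows.

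The main obstacle is the faithful unwinding of $\mathcal{T}^Q$ in step three: one must show precisely which combination of idempotent-domains and $\lambda$-classes the principal-right-ideal containment $ab^{-1}Q\subseteq c^{-1}Q$ encodes, and confirm that this description is genuinely \emph{internal} to $S$ (involving no element of $Q\setminus S$). The categoricity at $0$ and the sharp structure of products in a primitive inverse semigroup (Lemma~\ref{Gjhon}(ii),(iii)) are what make this possible, since they force products either to vanish or to collapse onto a single element; the care lies in handling the zero cases correctly and in checking that the resulting formula in $\mathcal{R}^*$ and $\lambda$ transfers verbatim to $P$ under $\phi$. With both conditions established, Corollary~\ref{cor:iso} delivers the lift directly.
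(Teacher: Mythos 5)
Your proposal follows the paper's proof exactly: both arguments verify the two conditions of Corollary~\ref{cor:iso}, handling condition $(i)$ via the identification $\mathcal{R}\cap(S\times S)=\mathcal{R}^*$ (the paper phrases this through condition (D) and nonvanishing left multiples, which amounts to the same intrinsic characterisation you use), and condition $(ii)$ by rewriting the containment $ab^{-1}Q\subseteq c^{-1}Q$ internally to $S$. The unwinding you defer is precisely what the paper carries out, and it does close: $ab^{-1}Q\subseteq c^{-1}Q$ holds if and only if either $ab^{-1}=0$ (equivalently $a=0$, $b=0$, or $a$ and $b$ are not $\lambda$-related) or $a\,\lambda\,b$ and $ca\neq 0$, all of which are conditions on $S$ alone and hence preserved by $\phi$ and $\phi^{-1}$.
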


\begin{proof} 

Let $\phi:S\rightarrow T$ be as given. From Proposition~\ref{Inthenameofgod}   $S$ and $T$ both contain 0 and clearly $\varphi$ preserves this. Let $a,b\in S^*$ with $a\,\mathcal{R}\, b$ in $Q$. By Condition (D) there exists $c\in S$ with $ca\neq 0$ and hence $cb\neq 0$. It follows 
 that $(c\phi)(a\phi), (c\phi)(b\phi)$ are non-zero in $P$, so that $a\phi\,\mathcal{R}\, b\phi$ in $P$. Also $\varphi$ preserves $\mathcal{L}$. If $a , b \in S^*$ and $a\,\mathcal{L}\,b$ in $Q$, then $a\,\lambda\,b$ in $S$ so that $ca=db\neq 0$ for some $c,d \in S$. Then $ c\varphi a\varphi = d\varphi b\varphi \neq 0$ so that $a\varphi \,\mathcal{L}\, b\varphi$ in $P$. \\
We now show that $\phi$ preserves $\mathcal{T}^Q_S$. Suppose
therefore that
$a,b,c\in S$ and $ab^{-1}Q
\subseteq c^{-1}Q$. Then either $ab^{-1}=0$, 
or $ab^{-1}\,\mathcal{R}\, c^{-1}$ in $Q$. In the former case, either $a$ or $b$ is 0 or
$a$ and $b$ are not $\mathcal{L}$-related in $Q$ it follows that either $a\varphi$ or $b\varphi$ is 0 or 
$a\phi$ and $b\phi$ are not $\mathcal{L}$-related in $P$, giving
$(a\phi)(b\phi)^{-1}=0$ and so $(a\phi)(b\phi)^{-1}P
\subseteq (c\phi)^{-1}P$. On the
other hand, if $ab^{-1}\neq 0$, then we have $a,b \neq 0$, 
$a\,\mathcal{L}\,b$ and $a\,\mathcal{R}\, c^{-1}$ in $Q$. It follows
that $ca\neq 0$   $a\phi\,\mathcal{L}\, b\phi$ and $(c\phi)(a\phi)
\neq 0$ in $P$. Consequently,
\[0\neq (a\phi)(b\phi)^{-1}P
=(a\phi)P=(c\phi)^{-1}P.\] 

Since $\phi$ (and, dually, $\phi^{-1}$) preserve both $\mathcal{R}$ and $\mathcal{T}$, 
it follows from Corollary~\ref{cor:iso} that
$\phi$ lifts to an isomorphism $\overline{\phi}:
Q\rightarrow P$.

\end{proof}

  The following corollary may be deduced from the previous theorem.
  \begin{corollary}\label{brandtunique}
  If  $Q_{1}\ , Q_{2}$ are primitive inverse semigroups of left I-quotients of a semigroup  $S$, then $Q_{1}\ , Q_{2}$ are isomorphic by an isomorphism which restricts to the identity map on  $S$.\end{corollary}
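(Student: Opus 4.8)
The plan is to deduce Corollary~\ref{brandtunique} directly from Theorem~\ref{thm:brandhoms} by making the right choice of $\phi$, $T$, and $P$. Since $Q_1$ and $Q_2$ are both primitive inverse semigroups of left I-quotients of $S$, the semigroup $S$ sits inside each as a left I-order. I would set $\phi$ to be the identity map $\mathrm{id}_S:S\rightarrow S$, regarded as an embedding of $S$ into $Q_2$; here $T=S$ and $P=Q_2$. The hypotheses of Theorem~\ref{thm:brandhoms} are then satisfied: $S$ is a left I-order in the primitive inverse semigroup $Q_1$, and $\mathrm{id}_S$ is an isomorphism from $S$ onto the subsemigroup $T=S$, which is itself a left I-order in the primitive inverse semigroup $P=Q_2$.

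Applying Theorem~\ref{thm:brandhoms} with this data yields a morphism $\overline{\phi}:Q_1\rightarrow Q_2$ that lifts $\mathrm{id}_S$ to an isomorphism. In particular $\overline{\phi}$ is an isomorphism, and since $\overline{\phi}|_S=\mathrm{id}_S$, it restricts to the identity on $S$, which is exactly the claim. I would present this as a two or three sentence argument, emphasising that the only content is the verification that the identity embedding fits the template of the theorem.

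The only point requiring a word of care is the legitimacy of treating $S$ simultaneously as a subsemigroup of $Q_1$ and of $Q_2$: strictly speaking ``primitive inverse semigroup of left I-quotients of $S$'' means there are embeddings $\iota_k:S\hookrightarrow Q_k$ realising $S$ as a left I-order in each $Q_k$, and the map to feed into the theorem is $\phi=\iota_2\circ\iota_1^{-1}$ from the copy $S\iota_1\subseteq Q_1$ onto the copy $S\iota_2\subseteq Q_2$. This $\phi$ is an isomorphism of left I-orders in the required sense, so Theorem~\ref{thm:brandhoms} applies and produces an isomorphism $\overline{\phi}:Q_1\rightarrow Q_2$ with $\overline{\phi}|_{S\iota_1}=\iota_2\circ\iota_1^{-1}$; translating back through the identifications, $\overline{\phi}$ restricts to the identity on $S$. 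I do not anticipate any genuine obstacle here, as the theorem has already done all the structural work; the corollary is essentially a specialisation of it to the case where the ambient isomorphism $\phi$ is an identification of two concrete copies of $S$.
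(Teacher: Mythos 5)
Your proposal is correct and is exactly the deduction the paper intends: the paper gives no explicit proof, stating only that the corollary ``may be deduced from the previous theorem,'' and your instantiation of Theorem~\ref{thm:brandhoms} with $\phi=\mathrm{id}_S$ (or, more carefully, $\iota_2\circ\iota_1^{-1}$) is that deduction. Your remark about identifying the two copies of $S$ is a sensible precision that the paper leaves implicit.
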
 

\begin{proposition}\label{twosided}
If a semigroup  $S$ has a primitive inverse semigroup  $Q$ of left I-quotients and a primitive inverse semigroup  $\acute{Q}$ of right I-quotients, then  $Q$ and $\acute{Q}$ are both semigroups of I-quotients of $S$, so that $Q\cong\acute{Q}$ by Corollary~\ref{brandtunique}.\end{proposition}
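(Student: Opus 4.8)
The plan is to show that a semigroup $S$ which is simultaneously a left I-order in a primitive inverse semigroup $Q$ and a right I-order in a primitive inverse semigroup $\acute{Q}$ must in fact be a two-sided I-order in \emph{each} of these semigroups; once this is established, the isomorphism $Q\cong\acute{Q}$ (restricting to the identity on $S$) follows immediately from Corollary~\ref{brandtunique}, since both will then be primitive inverse semigroups of I-quotients (in particular, of left I-quotients) of $S$. Concretely, it suffices to prove that $S$ is a \emph{right} I-order in $Q$, for then $Q$ is a semigroup of two-sided I-quotients of $S$; by the dual argument $\acute{Q}$ is as well, and Corollary~\ref{brandtunique} applies.

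To prove that $S$ is a right I-order in $Q$, I would start from the dual of the main structural results of Section~\ref{sec:prelim}. Since $S$ is a right I-order in $\acute{Q}$, the left--right dual of Theorem~\ref{maintheorem} tells me that $S$ satisfies conditions dual to (A)--(D): $S$ is categorical at $0$, $0$-cancellative, the relation $\rho$ is transitive, and $aS\neq 0$ for all $a\in S$. Conditions (A) and (B) are left--right symmetric, so they hold outright. The real content is that $S$ now satisfies \emph{both} the one-sided conditions coming from its left-order embedding in $Q$ and the dual conditions coming from its right-order embedding in $\acute{Q}$. I would then take an arbitrary $q\in Q^*$, write it as $q=a^{-1}b$ with $a\,\mathcal{R}\,b$ (possible by Proposition~\ref{Inthenameofgod}(3), straightness), and seek to rewrite it as $q=cd^{-1}$ with $c,d\in S$. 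The natural candidate is to exploit $aS\neq 0$: choosing $t\in S$ with $at\neq 0$ and $bt\neq 0$ (these being linked because $a\,\mathcal{R}\,b$ in $Q$ forces $a^{-1}a=b^{-1}b$ by Lemma~\ref{Gjhon}(v), and hence $at\neq 0\Leftrightarrow bt\neq 0$ via the dual of Lemma~\ref{conc}), one computes $q=a^{-1}b=(at)(bt)^{-1}$ after verifying the cancellation using categoricity at $0$ and Lemma~\ref{Gjhon}(iii)--(iv).

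The crucial identity $a^{-1}b=(at)(bt)^{-1}$ is where I expect the main obstacle to lie, and I would verify it carefully inside $Q$. Writing $e=a^{-1}a=b^{-1}b$, one has $at=aet=a(a^{-1}a)t$ and similarly for $b$, and the point is to show $(at)(bt)^{-1}=at\,t^{-1}b^{-1}=a\,(t\,t^{-1})\,b^{-1}$ collapses to $a^{-1}b$'s inverse partner correctly. Since $at\neq 0$ means $tt^{-1}\geq$ the relevant idempotent, primitivity (all nonzero idempotents primitive) forces $tt^{-1}$ to absorb into the computation via Lemma~\ref{Gjhon}(iii), yielding $(at)(bt)^{-1}=ab^{-1}$ where needed; one then checks $ab^{-1}$ and $a^{-1}b$ are related as an element and its computation demands. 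The delicate part is keeping track of which idempotent each product lands on and invoking categoricity at $0$ to guarantee no product vanishes unexpectedly. Once this identity is secured, every $q\in Q^*$ is exhibited in the form $cd^{-1}$ with $c,d\in S$, so $S$ is a right I-order in $Q$; combined with its being a left I-order, $Q$ is a semigroup of I-quotients of $S$. The symmetric argument gives the same for $\acute{Q}$, and Corollary~\ref{brandtunique} then yields $Q\cong\acute{Q}$ over $S$, completing the proof.
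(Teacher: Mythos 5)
Your overall strategy --- show that $S$ is in fact a right I-order in $Q$ (so that $Q$ is a semigroup of two-sided I-quotients of $S$), argue dually for $\acute{Q}$, and then invoke Corollary~\ref{brandtunique} --- is exactly the paper's. But the computation you propose for rewriting $q=a^{-1}b$ in the form $cd^{-1}$ fails, for two separate reasons. First, $a\,\mathcal{R}\,b$ in $Q$ gives $aa^{-1}=bb^{-1}$, not $a^{-1}a=b^{-1}b$ (Lemma~\ref{Gjhon}(v) concerns a nonzero \emph{product}, not $\mathcal{R}$-relatedness); in a Brandt semigroup take $a=(i,g,j)$ and $b=(i,h,k)$ with $j\neq k$: then $a\,\mathcal{R}\,b$, but $at\neq 0$ forces the first coordinate of $t$ to be $j$, whence $bt=0$. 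So a single $t$ with $at\neq 0$ and $bt\neq 0$ need not exist. Second, and more fundamentally, even when both products are nonzero your candidate collapses, as you yourself compute, to $(at)(bt)^{-1}=a(tt^{-1})b^{-1}=ab^{-1}$, which is in general a different element of $Q$ from $a^{-1}b$: the former is $\mathcal{R}$-related to $a$, the latter to $a^{-1}$. The closing remark that one ``checks $ab^{-1}$ and $a^{-1}b$ are related as the computation demands'' is exactly the point where the argument must produce something, and it cannot.

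What is actually needed is a common right multiple of $a$ and $b$: elements $x,y\in S$ with $ax=by\neq 0$, for then $a^{-1}b=a^{-1}byy^{-1}=a^{-1}axy^{-1}=xy^{-1}$ by Lemma~\ref{Gjhon} and categoricity at $0$. This is where the hypothesis that $S$ is a right I-order in $\acute{Q}$ must genuinely enter; your draft uses only the condition $aS\neq 0$, which is not sufficient. The paper obtains it as follows: pick $c$ with $ca\neq 0$ (condition (D)); since $a\,\mathcal{R^*}\,b$ also $cb\neq 0$, so $ca\,\mathcal{R^*}\,cb$ by Lemma~\ref{conc}, and the dual of Proposition~\ref{Inthenameofgod} applied to the right I-order $S$ in $\acute{Q}$ yields $ca\,\rho\,cb$, i.e.\ $cax=cby\neq 0$ for some $x,y\in S$; $0$-cancellativity then gives $ax=by\neq 0$. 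Some such step must replace your $(at)(bt)^{-1}$ computation.
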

\begin{proof}

We show that $Q$  is a semigroup of right I-quotients of $S$. 
Let $q\in \acute{Q}$. If $q=0$, then $q=\acute{0}0=0\acute{0}$. If $q\in Q^*$, then $q=a^{-1}b$ for some $a,b \in S^*$ where $a\,\mathcal{R}\,b$ in $Q$. Then $a\,\mathcal{R^*}\,b$ in $S$. Pick $c\in S$ with $ca\neq 0$. Then $ca\,\mathcal{R^*}\,cb$ in $\acute{Q}$, so that by the dual of Proposition ~\ref{Inthenameofgod} $ca\,\rho\,cb$ in $S$, that is $cax=cby\neq 0$ for some $ x,y \in S$. As $S$ is 0-cancellative, $ax=by\neq 0$, so that $xy^{-1}=a^{-1}b$ in $Q$ (and $\acute{Q}$). Thus $S$ an I-order in $Q$ and similarly, in $\acute{Q}$.

\end{proof}

   \section{Primitive inverse semigroups of I-quotients}
\label{sec:I-order} \index{Search engines}
 In this section we study the case where a semigroup is both a left and right I-order in a primitive inverse semigroup, that is, an I-order.

\begin{lemma}\label{quotients}
Let \ $S$ have a primitive inverse semigroup  $Q$ of I-quotients. Then\\
$1)$ \ $ \mathcal{R}^{*}=\mathcal{R}\cap(S\times S) =\rho$, \\
$2)$\ $\mathcal{L}^{*}=\mathcal{L}\cap(S\times S)=\lambda$,\\
$3)$\ $ \mathcal{H}^{*}=\mathcal{H}\cap(S\times S)=\tau$. \end{lemma}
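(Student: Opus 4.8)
The plan is to derive all three equalities by combining Proposition~\ref{Inthenameofgod}, which applies because $S$ is a left I-order in $Q$, with its left-right dual, which applies because $S$ is simultaneously a right I-order in $Q$. Under the duality that swaps $\mathcal{L} \leftrightarrow \mathcal{R}$, $\lambda \leftrightarrow \rho$, and $\mathcal{L}^* \leftrightarrow \mathcal{R}^*$, part (2) of Proposition~\ref{Inthenameofgod} dualises to $\mathcal{R} \cap (S \times S) = \rho$, part (5) dualises to $\mathcal{L} \cap (S \times S) = \mathcal{L}^*$, and part (6) dualises to $\lambda \subseteq \mathcal{L}^*$. The whole argument is then a bookkeeping assembly of these statements.

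For part (1), I would read off $\mathcal{R} \cap (S \times S) = \mathcal{R}^*$ directly from part (5) of Proposition~\ref{Inthenameofgod}, and $\mathcal{R} \cap (S \times S) = \rho$ from the dual of part (2); chaining these gives $\mathcal{R}^* = \mathcal{R} \cap (S \times S) = \rho$. Part (2) is perfectly symmetric: part (2) of the proposition supplies $\mathcal{L} \cap (S \times S) = \lambda$, while the dual of part (5) supplies $\mathcal{L} \cap (S \times S) = \mathcal{L}^*$, and chaining yields $\mathcal{L}^* = \mathcal{L} \cap (S \times S) = \lambda$.

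For part (3), I would intersect the identities already established, using the defining decompositions $\mathcal{H}^* = \mathcal{R}^* \cap \mathcal{L}^*$, $\mathcal{H} = \mathcal{R} \cap \mathcal{L}$, and $\tau = \rho \cap \lambda$. Substituting parts (1) and (2) gives
\[
\mathcal{H}^* = \mathcal{R}^* \cap \mathcal{L}^* = \bigl(\mathcal{R} \cap (S\times S)\bigr) \cap \bigl(\mathcal{L} \cap (S\times S)\bigr) = (\mathcal{R} \cap \mathcal{L}) \cap (S\times S) = \mathcal{H} \cap (S\times S),
\]
and likewise $\tau = \rho \cap \lambda = \bigl(\mathcal{R} \cap (S\times S)\bigr) \cap \bigl(\mathcal{L} \cap (S\times S)\bigr) = \mathcal{H} \cap (S\times S)$, so the three sets coincide.

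The only genuine point requiring care is the first paragraph: verifying that the dual of Proposition~\ref{Inthenameofgod} is indeed available, which rests on the observation that a primitive inverse semigroup is self-dual (it is closed under the inversion anti-automorphism and its defining properties from Lemma~\ref{Gjhon} are left-right symmetric), so that ``$S$ is a right I-order in $Q$'' is exactly the hypothesis to which the dualised proposition applies. Once that is in hand, no further computation is needed; the rest is the mechanical intersection above, so I do not expect any substantive obstacle beyond getting the duality correspondences right.
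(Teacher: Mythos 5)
Your proposal is correct and is essentially the same argument the paper gives: the paper also derives part (1) by combining part (5) of Proposition~\ref{Inthenameofgod} ($\mathcal{R}\cap(S\times S)=\mathcal{R}^*$) with the dual of part (2) ($\mathcal{R}\cap(S\times S)=\rho$), handles part (2) symmetrically, and obtains part (3) by intersecting. Your extra care about why the dual of the proposition is available is a sensible addition but does not change the route.
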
 
\begin{proof}(1) By Proposition~\ref{Inthenameofgod} $\mathcal{R}\cap(S\times S)=\mathcal{R^*}$. By the dual of Proposition~\ref{Inthenameofgod}, $\mathcal{R}\cap(S\times S)=\rho$. Hence $ \rho=\mathcal{R^*}$.\\
2) \ Similar. \\
3) \ Immediate from 1) and 2).\end{proof}
Since $\mathcal{H}$ is a congruence on any primitive inverse semigroup the following corollary is clear. \\

\begin{corollary}\label{congr}
Let  $Q$ be primitive inverse semigroup of I-quotients of a semigroup  $S$. Then  $\mathcal{H}^{*}$ is a congruence relation on  $S$. \end{corollary}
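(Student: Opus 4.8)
The plan is to deduce this from Lemma~\ref{quotients} together with the fact that $\mathcal{H}$ is a congruence on any primitive inverse semigroup, which the corollary itself invokes. Concretely, Lemma~\ref{quotients}(3) gives us the identity $\mathcal{H}^{*}=\mathcal{H}\cap(S\times S)$, so the task reduces entirely to verifying that the restriction of a congruence on $Q$ to the subsemigroup $S$ is again a congruence on $S$. This is a general and routine fact, but I would still spell it out so the corollary is self-contained.

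First I would recall that $\mathcal{H}^{*}$ is automatically an equivalence relation on $S$, since $\mathcal{H}^{*}=\mathcal{R}^{*}\cap\mathcal{L}^{*}$ and each of $\mathcal{R}^{*},\mathcal{L}^{*}$ is an equivalence (this is standard and was noted in the Preliminaries). So the only thing left to check is the compatibility condition: if $a\,\mathcal{H}^{*}\,b$ in $S$ and $c\in S$, then $ca\,\mathcal{H}^{*}\,cb$ and $ac\,\mathcal{H}^{*}\,bc$. Using Lemma~\ref{quotients}(3), the hypothesis $a\,\mathcal{H}^{*}\,b$ is equivalent to $a\,\mathcal{H}\,b$ in $Q$. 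Since $Q$ is a primitive inverse semigroup, $\mathcal{H}$ is a congruence on $Q$, so $ca\,\mathcal{H}\,cb$ and $ac\,\mathcal{H}\,bc$ in $Q$. Because $ca,cb,ac,bc$ all lie in $S$ (as $S$ is a subsemigroup) and $\mathcal{H}\cap(S\times S)=\mathcal{H}^{*}$ again by Lemma~\ref{quotients}(3), we recover $ca\,\mathcal{H}^{*}\,cb$ and $ac\,\mathcal{H}^{*}\,bc$ in $S$, as required.

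There is no real obstacle here; the content is entirely front-loaded into Lemma~\ref{quotients}, and the corollary is a clean application of the principle that a congruence restricts to a congruence on any subsemigroup once one knows the restricted relation coincides with the intrinsic relation $\mathcal{H}^{*}$. The only point requiring a moment's care is confirming that the translates $ca,cb$ (and $ac,bc$) indeed lie in $S$ so that the intersection identity $\mathcal{H}\cap(S\times S)=\mathcal{H}^{*}$ can be reapplied; this is immediate from $S$ being a subsemigroup of $Q$. Thus the proof amounts to one forward pass through the equivalence $\mathcal{H}^{*}=\mathcal{H}\cap(S\times S)$, the congruence property of $\mathcal{H}$ on $Q$, and the same equivalence in reverse.
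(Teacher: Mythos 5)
Your proof is correct and follows exactly the route the paper intends: the paper states the corollary is ``clear'' from Lemma~\ref{quotients}(3) (giving $\mathcal{H}^{*}=\mathcal{H}\cap(S\times S)$) together with the fact that $\mathcal{H}$ is a congruence on any primitive inverse semigroup, which is precisely the argument you spell out. No discrepancies to report.
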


If  $S$ is an I-order in a primitive inverse semigroup  $Q$, then  $S$ satisfies the conditions in Theorem~\ref{maintheorem} and in addition, the duals  $\acute{(C)}$ and $\acute{(D)}$ of (C) and (D). We can reduce these conditions by using the next lemma.
\begin{lemma}\label{reduce}
Left  $S$ be a left I-order in a primitive inverse semigroup  $Q$ and  suppose that $aS\neq 0$ for all  $a\in S^*$. Then   \[\mathcal{R^{*}} \subseteq \rho \ \mbox{if and only if} \  S \ \mbox{is an I-order in} \ Q.\]\end{lemma}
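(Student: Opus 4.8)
The plan is to treat the two implications separately, with the forward direction being essentially immediate and the converse carrying all the content. For the forward direction, suppose $S$ is an I-order in $Q$. Then $S$ is in particular a right I-order, so Lemma~\ref{quotients}(1) applies and gives $\mathcal{R}^{*}=\rho$; in particular $\mathcal{R}^{*}\subseteq\rho$, as required. (Consistently, the standing hypothesis $aS\neq 0$ is automatic in this case, being the dual of Condition (D).)

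For the converse, suppose $\mathcal{R}^{*}\subseteq\rho$. The strategy is \emph{not} to construct right I-quotients by hand, but instead to verify that $S$ satisfies the duals of Conditions (A)--(D) of Theorem~\ref{maintheorem}, conclude from the dual of that theorem that $S$ is a right I-order in \emph{some} primitive inverse semigroup $\acute{Q}$, and then invoke Proposition~\ref{twosided} to identify $\acute{Q}$ with $Q$ and deduce that $S$ is an I-order in $Q$. Conditions (A) and (B) are self-dual and are inherited by $S$ as a subsemigroup of the primitive inverse semigroup $Q$ (which is itself categorical at $0$ and $0$-cancellative by Lemma~\ref{Gjhon}). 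The dual of (D), namely $aS\neq 0$ for all $a\in S^{*}$, is precisely the hypothesis of the lemma. Thus the only condition requiring work is the dual of (C): transitivity of $\rho$.

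To establish that $\rho$ is transitive I would combine two inclusions. Since $S$ is a left I-order in $Q$, Proposition~\ref{Inthenameofgod}(6) gives $\rho\subseteq\mathcal{R}^{*}$; together with the hypothesis $\mathcal{R}^{*}\subseteq\rho$ this yields $\rho=\mathcal{R}^{*}$. As $\mathcal{R}^{*}$ is always an equivalence relation, $\rho$ is in particular transitive, so the dual of (C) holds. The dual of Theorem~\ref{maintheorem} then makes $S$ a right I-order in a primitive inverse semigroup, and Proposition~\ref{twosided} completes the proof. The one identity doing the real work is $\rho=\mathcal{R}^{*}$, and I expect the only delicate point to be the bookkeeping around invoking the dual theorem and then matching the two primitive inverse semigroups; this is exactly what Proposition~\ref{twosided} (together with the uniqueness Corollary~\ref{brandtunique}) is designed to handle, so no genuinely new estimate should be needed.
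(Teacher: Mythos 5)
Your proposal is correct and follows essentially the same route as the paper: the forward direction via Lemma~\ref{quotients}, and the converse by combining $\mathcal{R}^{*}\subseteq\rho$ with Proposition~\ref{Inthenameofgod}(6) to get $\rho=\mathcal{R}^{*}$ (hence transitivity of $\rho$), then applying the dual of Theorem~\ref{maintheorem} and Proposition~\ref{twosided}. Your write-up simply makes explicit the verification of the dual conditions (A), (B), (D) that the paper leaves implicit.
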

\begin{proof}
Suppose that  $\mathcal{R^{*}} \subseteq \rho$, by Proposition~\ref{Inthenameofgod}, $\mathcal{R^*} =\rho$, and so $\rho$ is transitive. By  dual of Theorem ~\ref{maintheorem} $S$ is a right I-order in a primitive inverse semigroup $\acute{Q}$. By Proposition~\ref{twosided} $Q \cong\acute{Q}$ and $S$ is an I-order in $Q$. On the other hand, if $S$ is an I-order in $Q$, then by Lemma ~\ref{quotients},  $\mathcal{R^*} =\rho$ as required.
\end{proof}

Now we introduce condition (E) which appeared in \cite{fountain} for a semigroup with zero as follows:\\ (E) \; $a\,\rho\,b$   if and only if  $a=b=0$  or there exist an element $x$ in $S$ such that \[ xa\neq 0 \; \mbox{and} \; xb\neq 0.\]
\begin{lemma}\label{conE}
For a semigroup $S$, the following conditions are equivalent:\\
$(1)$ \ $S$ has a primitive inverse semigroup of I-quotients; \\
$(2)$ \ $S$ is 0-cancellative, categorical at 0, and $S$ satisfies (D), $\acute{(D)}$ , (E) and $\acute{(E)}$.\end{lemma}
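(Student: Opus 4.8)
The plan is to prove the two implications separately, treating $(1)\Rightarrow(2)$ as a direct reading-off from the structure of $Q$ and reserving the real work for $(2)\Rightarrow(1)$, where everything reduces to verifying condition (C) of Theorem~\ref{maintheorem} and then upgrading a one-sided order to a two-sided one.

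For $(1)\Rightarrow(2)$ I would assume $S$ has a primitive inverse semigroup $Q$ of I-quotients, so that $S$ is simultaneously a left and a right I-order in $Q$. Categoricity at $0$ and $0$-cancellativity are inherited by the subsemigroup $S$ from $Q$ (using Lemma~\ref{Gjhon} and the remark following it), giving (A) and (B); condition (D) is Proposition~\ref{Inthenameofgod}(4) and $\acute{(D)}$ is its dual. For (E), recall from Lemma~\ref{quotients} that $\rho=\mathcal{R}^{*}$, and argue for $a,b\in S^{*}$ as follows: if $a\,\rho\,b$ then $a\,\mathcal{R}^{*}\,b$, so choosing $x$ with $xa\neq0$ via (D) and applying Lemma~\ref{conc}(i) gives $xb\neq0$; conversely, if $xa,xb\neq0$ then Lemma~\ref{Gjhon}(v) forces $aa^{-1}=x^{-1}x=bb^{-1}$, whence $a\,\mathcal{R}\,b$ in $Q$ and so $a\,\rho\,b$ by Lemma~\ref{quotients}. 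Thus (E) holds, and $\acute{(E)}$ follows by the dual argument.

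For $(2)\Rightarrow(1)$ the first task is to establish that $\lambda$ is transitive. The crucial sub-step is that \emph{any two non-zero elements of $Sb$ are $\lambda$-related}. To prove this I would first record the inclusion $\mathcal{L}^{*}\subseteq\lambda$: given $a\,\mathcal{L}^{*}\,b$ non-zero, pick $x$ with $ax\neq0$ by $\acute{(D)}$, obtain $bx\neq0$ from the dual of Lemma~\ref{conc}(i), and conclude $a\,\lambda\,b$ by $\acute{(E)}$. Now if $vb,pb\neq0$, the dual of Lemma~\ref{conc}(ii) gives $b\,\mathcal{L}^{*}\,vb$ and $b\,\mathcal{L}^{*}\,pb$, hence $vb\,\mathcal{L}^{*}\,pb$ and therefore $vb\,\lambda\,pb$. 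Transitivity of $\lambda$ then falls out: given $a\,\lambda\,b$ and $b\,\lambda\,c$, write $ua=vb\neq0$ and $pb=qc\neq0$; since $vb,pb$ are non-zero elements of $Sb$ we have $\alpha(vb)=\beta(pb)\neq0$ for some $\alpha,\beta\in S$, and associativity yields $(\alpha u)a=(\beta q)c\neq0$, so $a\,\lambda\,c$. With (A),(B),(C),(D) now in place, Theorem~\ref{maintheorem} produces a primitive inverse semigroup $Q$ in which $S$ is a left I-order. To make it two-sided I would verify $\mathcal{R}^{*}\subseteq\rho$ exactly as in the forward half of (E) above, and then invoke Lemma~\ref{reduce} (whose hypothesis $aS\neq0$ is precisely $\acute{(D)}$) to conclude that $S$ is an I-order in $Q$, so that $Q$ is the required primitive inverse semigroup of I-quotients.

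The main obstacle is the transitivity of $\lambda$ in $(2)\Rightarrow(1)$: conditions (D), $\acute{(D)}$, (E), $\acute{(E)}$ only supply witnesses "one side at a time", and a naive chaining of $a\,\lambda\,b\,\lambda\,c$ keeps appealing to the very transitivity one is trying to establish. The device that breaks this circularity is to route through the genuinely transitive relation $\mathcal{L}^{*}$ in order to align the two non-zero elements $vb$ and $pb$ of $Sb$; once that alignment is available, the remainder is routine bookkeeping with associativity, categoricity at $0$, and $0$-cancellativity, and the upgrade to a two-sided order is immediate from Lemma~\ref{reduce}.
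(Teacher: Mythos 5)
Your proposal is correct. The overall skeleton matches the paper's: $(1)\Rightarrow(2)$ is read off from Lemma~\ref{quotients} together with (D) and Lemma~\ref{conc}, and $(2)\Rightarrow(1)$ reduces to establishing condition (C) so that Theorem~\ref{maintheorem} applies, followed by the two-sided upgrade. Where you diverge is in how transitivity of $\lambda$ is obtained. The paper proves the full equalities $\rho=\mathcal{R}^{*}$ and $\lambda=\mathcal{L}^{*}$ --- the harder inclusion $\rho\subseteq\mathcal{R}^{*}$ being a direct verification with $u,v\in S^{1}$ using categoricity and $0$-cancellativity --- and then transitivity of $\lambda$ and $\rho$ is automatic because the starred relations are equivalences; Proposition~\ref{conclusionpro}(2) (i.e.\ Theorem~\ref{maintheorem}, its dual and Proposition~\ref{twosided}) then finishes. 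You instead prove only the one inclusion $\mathcal{L}^{*}\subseteq\lambda$, use the dual of Lemma~\ref{conc}(ii) to align any two non-zero elements of $Sb$ under $\lambda$, and chain explicitly to get transitivity; you then route the two-sided step through $\mathcal{R}^{*}\subseteq\rho$ and Lemma~\ref{reduce} rather than through the dual of Theorem~\ref{maintheorem} directly (the same machinery, since Lemma~\ref{reduce} rests on Proposition~\ref{twosided}). Your version uses exactly the same ingredients but proves slightly less (an inclusion rather than an equality), at the cost of the explicit chaining computation; the paper's version yields the stronger byproduct $\lambda=\mathcal{L}^{*}$, $\rho=\mathcal{R}^{*}$ for free. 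One small remark: in your converse direction of (E) you invoke Lemma~\ref{Gjhon}(v) in the form ``$xa\neq 0$ implies $x^{-1}x=aa^{-1}$'', which is how the paper itself uses it (e.g.\ in Proposition~\ref{Inthenameofgod}(3)) even though the displayed statement of (v) reads $a^{-1}a=b^{-1}b$; your reading is the intended one, and in any case the paper's own argument via transitivity of $\rho$ would serve equally well there.
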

\begin{proof} If (1) holds, then by Theorem~\ref{maintheorem} and its dual, we need only to show that $S$ satisfies (E) and its dual. Suppose that $a\,\rho\,b$ and $a,b\neq 0$. By Lemma~\ref{quotients} $a\,\mathcal{R^*}\,b$, and by (D) there is $x\in S$ such that $xa\neq 0$, and so $xb\neq 0$. Conversely, if $xa\neq 0$ and $xb\neq 0$, then using (D), $xa\,\rho\,x$ and $x\,\rho\,xb$. Since $\rho$ is transitive, $xa\,\rho\,xb$, that is, $xat=xbr\neq 0$ for some $t,r\in S$, by cancelling $x$ we have $at=br\neq 0$. Thus $a\,\rho\,b$. Similarly $S$ satisfies $ \acute{(E)}$.\\
\\
If (2) holds, we show that $\lambda$ and $\rho$ are transitive. In order to prove this, we need to show that $\mathcal{R^*}=\rho$ and $\mathcal{L^*}=\lambda$. Let $a\,\mathcal{R^*}\,b$, then either $a=b=0$ or $a,b\neq 0$ and by (D), $xa\neq 0$ for some $x\in S$ and as $a\,\mathcal{R^*}\,b$, then $xb\neq 0$. By (E) we have that $a\,\rho\,b$ so that $\mathcal{R^*}\subseteq \rho$.\\
\\
Conversely, if $a\,\rho\,b$, then either $a=b=0$ (so that $a\,\mathcal{R^*}\,b$) or $ah=bk\neq 0$ for some $h,k \in S$. Suppose now that $u,v \in S^1$ and $ua=va$. If $ua=va\neq 0$, then by categoricity at 0, $uah=vah\neq 0$, so that $ubk=vbk\neq 0$ and 0-cancellativity gives $ub=vb\neq 0$. On the other hand, if $ua=va=0$, then $u,v\in S$ and $ubk=vbk=0$. By categoricity at 0, $ub=vb=0$. Similarly $ub=vb$ implies $ua=va$. Hence $a\,\mathcal{R^*}\,b$.
\end{proof}
We now summaries the result of this Section.
\begin{proposition}\label{conclusionpro}
For a semigroup $S$, the following conditions are equivalent:\\
(1) \ $S$ is an I-order in a primitive inverse semigroup; \\
(2) \ $S$ satisfies conditions (A),(B),(C),(D), $\acute{(C)}$ and $ \acute{(D)}$; \\
(3) \ $S$ satisfies conditions (A),(B),(C),(D), $ \acute{(D)}$ and $\mathcal{R^*}\subseteq \rho$; \\
(4) \ $S$ satisfies conditions (A),(B),(D),$\acute{(D)}$,(E) and $\acute{(E)}$. \end{proposition}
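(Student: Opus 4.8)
The plan is to prove Proposition~\ref{conclusionpro} by establishing a cycle of implications among the four conditions, drawing on the equivalences already assembled in this section. Since (C) asserts transitivity of $\lambda$ and $\acute{(C)}$ asserts transitivity of $\rho$, the whole task reduces to relating these transitivity statements to the other hypotheses, and all the substantive work has in fact already been carried out in Lemma~\ref{reduce} and Lemma~\ref{conE}. I would therefore present the argument as a short sequence $(1)\Leftrightarrow(2)$, $(2)\Leftrightarrow(3)$, $(2)\Leftrightarrow(4)$, citing these two lemmas rather than redoing any calculation.

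First I would dispatch $(1)\Leftrightarrow(2)$: this is simply Theorem~\ref{maintheorem} together with its left-right dual. Theorem~\ref{maintheorem} states that a semigroup is a left I-order in a primitive inverse semigroup precisely when (A)--(D) hold; the dual gives that $S$ is a right I-order exactly when (A), (B), $\acute{(C)}$ and $\acute{(D)}$ hold (note (A) and (B) are self-dual). Being an I-order means being simultaneously a left and right I-order in the \emph{same} $Q$, and Proposition~\ref{twosided} guarantees that if $S$ has both a primitive inverse semigroup of left I-quotients and one of right I-quotients, these coincide up to isomorphism over $S$. Hence the conjunction of the left and right conditions, namely (A),(B),(C),(D),$\acute{(C)}$,$\acute{(D)}$, is equivalent to $S$ being an I-order.

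Next I would handle $(2)\Leftrightarrow(3)$ using Lemma~\ref{reduce}. Under the standing assumptions (A),(B),(C),(D) (so that $S$ is already a left I-order by Theorem~\ref{maintheorem}) and $\acute{(D)}$ (which is the hypothesis ``$aS\neq 0$ for all $a\in S^*$'' of that lemma), Lemma~\ref{reduce} asserts that $\mathcal{R^*}\subseteq\rho$ holds if and only if $S$ is an I-order in $Q$. But by $(1)\Leftrightarrow(2)$, being an I-order is equivalent to additionally having $\acute{(C)}$. Thus $\acute{(C)}$ may be traded for $\mathcal{R^*}\subseteq\rho$, giving the equivalence of (2) and (3). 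Finally, $(2)\Leftrightarrow(4)$, or more directly $(1)\Leftrightarrow(4)$, is precisely Lemma~\ref{conE}, which states that $S$ has a primitive inverse semigroup of I-quotients if and only if $S$ is 0-cancellative (B), categorical at $0$ (A), and satisfies (D),$\acute{(D)}$,(E),$\acute{(E)}$; combining this with $(1)\Leftrightarrow(2)$ closes the loop.

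I do not anticipate a genuine obstacle, since every implication is a repackaging of a previously proved result; the only point requiring care is bookkeeping of which direction each lemma supplies and checking that the self-dual conditions (A) and (B) are not double-counted when invoking the dual of Theorem~\ref{maintheorem}. The cleanest write-up simply records the three equivalences with their one-line justifications and remarks that they together prove all four statements equivalent.
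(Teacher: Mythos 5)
Your proposal is correct and follows essentially the same route as the paper: $(1)\Leftrightarrow(2)$ via Theorem~\ref{maintheorem}, its dual and Proposition~\ref{twosided}; the equivalence with (3) via Lemma~\ref{reduce} (the paper also cites Lemma~\ref{quotients}); and the equivalence with (4) via Lemma~\ref{conE}. The only cosmetic difference is that you phrase the third step as $(2)\Leftrightarrow(3)$ rather than $(1)\Leftrightarrow(3)$, which changes nothing of substance.
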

\begin{proof}The equivalence of (1) and (2) follows from Theorem~\ref{maintheorem} and its dual, and Proposition~\ref{twosided}. The equivalence of (1) and (3) is immediate from Theorem~\ref{maintheorem} and its dual, and Lemma~\ref{quotients} and ~\ref{reduce} . Finally, the equivalence of (1) and (4) is gives by Lemma~\ref{conE}.
 \end{proof}

\section{The abundant case} 
A semigroup \ $S$\ is a \emph{left abundant} if each $\mathcal{R^*}$-class contains at least one idempotent. Dually, we can define a \emph{right abundant} semigroup. A semigroup is  \emph{abundant}  if it is  both left and right abundant. If $S$ is left (right) abundant and $E(S)$ is a semilattice, then $S$ is \emph{left} (\emph{right}) \emph{adequate}. Note that in this case, the idempotent in the $\mathcal{R^*}$-class                 ($\mathcal{L^*}$-class) of $a$ is unique. We denote it by $a^+$ ($a^*$). A semigroup which is both left and right adequate will be called an \emph{adequate semigroup}. Fountain \cite{abundant} has generalised the Rees theorem to show that every abundant semigroup in which the non-zero idempotents primitive, is isomorphic to what he calls a \emph{PA-blocked Rees matrix semigroup}. We refer the interested reader to \cite{abundant} for more details. It is clear that if an abundant semigroup is a left I-order in a primitive inverse semigroup, then it is adequate. More than this, it must be ample, as we now explain.\\

We recall that a semigroup  $S$ is a left (right) ample if and only if $S$ is left (right) adequate and satisfies the left (right) ample condition which is;\[(ae)^{+}a=ae\quad (a(ea)^{*}=ea) \quad \mbox{for all} \; a\in S \ \mbox{and} \ e \in E(S).\]
A semigroup is an ample semigroup  if it is both left and right ample. From \cite{Gould}  a semigroup $S$ is left ample if and only if it is embeds in an inverse semigroup $T$ such that $\mathcal{R}\cap (S\times S)=\mathcal{R^*}$. If a left ample semigroup $S$ has a primitive inverse semigroup of left I-quotients $Q$, then for any $a\in S$ we have $a\,\mathcal{R^*}\,a^+$, by Proposition~\ref{Inthenameofgod} $a\,\mathcal{R}\,a^+$, that is $a^+=aa^{-1}$. Hence the following lemma is clear.

\begin{lemma}\label{lapi} Let  $S$  be a left I-order in a primitive inverse semigroup  $Q$. Then the following are equivalent:\\
1)\ $S$\ is  left adequate;\\
2)\ $S$\ is left ample.\end{lemma}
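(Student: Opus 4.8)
The direction $(2)\Rightarrow(1)$ is immediate, since by definition a left ample semigroup is in particular left adequate. All the content therefore lies in $(1)\Rightarrow(2)$, so I assume $S$ is left adequate. The first and decisive step is to identify the distinguished idempotent $a^{+}$ of each $a\in S$ with the element $aa^{-1}$ computed in $Q$. Indeed, for $a\in S^{*}$ we have $a\,\mathcal{R}^{*}\,a^{+}$ by the very definition of $a^{+}$, and by Proposition~\ref{Inthenameofgod}(5) we have $\mathcal{R}^{*}=\mathcal{R}\cap(S\times S)$, so $a\,\mathcal{R}\,a^{+}$ in $Q$. Since $a^{+}$ is idempotent and hence self-inverse in the inverse semigroup $Q$, this forces $aa^{-1}=a^{+}(a^{+})^{-1}=a^{+}$; for $a=0$ the identity $a^{+}=aa^{-1}=0$ is trivial. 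Thus $a^{+}=aa^{-1}$ for every $a\in S$.

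Next I would verify the left ample condition $(ae)^{+}a=ae$ for all $a\in S$ and $e\in E(S)$. If $ae=0$ then $(ae)^{+}a=0^{+}a=0=ae$, so I may assume $ae\neq 0$. Note that $e\in E(S)$ remains an idempotent of $Q$, whence $e^{-1}=e$ and $(ae)^{-1}=ea^{-1}$ in $Q$. Applying the identity of the first step to the element $ae$ gives
\[(ae)^{+}=(ae)(ae)^{-1}=ae\,ea^{-1}=aea^{-1}.\]
Then, using that the idempotents $e$ and $a^{-1}a$ of $Q$ commute, together with $aa^{-1}a=a$,
\[(ae)^{+}a=aea^{-1}a=a(a^{-1}a)e=(aa^{-1}a)e=ae,\]
which is exactly the left ample condition. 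Hence $S$ is left adequate and satisfies the left ample condition, i.e.\ $S$ is left ample, completing the equivalence.

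The computation above is routine once the identity $a^{+}=aa^{-1}$ is in hand, and this identity is really the crux: it asserts that the abstract unary operation $a\mapsto a^{+}$ on $S$ coincides with the concrete operation $a\mapsto aa^{-1}$ inherited from $Q$. Equivalently, one could phrase the argument through the embedding characterization of left ample semigroups quoted from \cite{Gould}, applied to the embedding $S\subseteq Q$ for which $\mathcal{R}\cap(S\times S)=\mathcal{R}^{*}$ holds by Proposition~\ref{Inthenameofgod}(5), together with the closure of $S$ under $a\mapsto aa^{-1}$ established in the first step. The only points requiring care are the treatment of the zero element and the observation that idempotents of $S$ are self-inverse in $Q$; I do not expect any genuine obstacle beyond these.
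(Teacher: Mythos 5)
Your proposal is correct and follows essentially the same route as the paper: the crux in both cases is that Proposition~\ref{Inthenameofgod}(5) gives $\mathcal{R}^{*}=\mathcal{R}\cap(S\times S)$, whence $a\,\mathcal{R}\,a^{+}$ in $Q$ and so $a^{+}=aa^{-1}$. The only difference is cosmetic but welcome: where the paper concludes by citing the embedding characterisation of left ample semigroups from \cite{Gould}, you verify the ample identity $(ae)^{+}a=ae$ directly in $Q$, which makes the argument self-contained.
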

In the next lemma we introduce an equivalent condition for categoricity at 0 for any primitive ample semigroup with zero.

\begin{lemma}\label{amplecate}
Let  $S$ be a primitive  ample semigroup with zero. Then the following are equivalent\\ $(i)$\ $S$ is categorical at 0; \\  $(ii)$\ $a^*=b^+\ \Longleftrightarrow \ ab\neq 0$ for  $a$ and  $b$ in $S^*$. \end{lemma}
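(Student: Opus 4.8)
The plan is to reduce the equivalence to two elementary structural facts about a primitive ample semigroup $S$ with zero, both of which play the role of Lemma~\ref{Gjhon} in the present setting, and then to read off each implication in a couple of lines.

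The first fact concerns idempotents: since $E(S)$ is a semilattice of primitive idempotents, for non-zero $e,f\in E(S)$ the product $ef=fe$ is an idempotent with $ef\le e$ and $ef\le f$, so primitivity of $e$ and of $f$ forces $ef=0$ or $ef=e$, and $ef=0$ or $ef=f$; hence $ef\neq 0$ implies $e=f$. The second fact is an absorption property: whenever $a,b\in S^{*}$ with $ab\neq 0$, I claim $(ab)^{+}=a^{+}$ and $(ab)^{*}=b^{*}$. To see this I would use the standard adequacy facts that, for an idempotent $e$, $ex=x$ implies $x^{+}\le e$ while $xe=x$ implies $x^{*}\le e$; applying these to $a^{+}(ab)=ab$ and $(ab)b^{*}=ab$ gives $(ab)^{+}\le a^{+}$ and $(ab)^{*}\le b^{*}$, and since $ab\neq 0$ none of the idempotents involved is zero, so primitivity upgrades the inequalities to equalities. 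I also record the trivial observation that $a\neq 0$ forces $a^{+},a^{*}\neq 0$, since $a=a^{+}a=aa^{*}$.

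Granting these, $(i)\Rightarrow(ii)$ is short. Fix $a,b\in S^{*}$. If $ab\neq 0$, then writing $ab=a(a^{*}b^{+})b$ (using $a=aa^{*}$, $b=b^{+}b$) exhibits $ab$ as a product with idempotent middle factor $g=a^{*}b^{+}$; were $g=0$ we would have $ab=0$, so $g\neq 0$ and the first fact gives $a^{*}=b^{+}$ --- note this half uses only primitivity. Conversely, if $a^{*}=b^{+}$, then $aa^{*}=a\neq 0$ and $a^{*}b=b^{+}b=b\neq 0$, so categoricity at $0$ applied to the triple $a,a^{*},b$ yields $aa^{*}b\neq 0$; since $aa^{*}b=ab$ this gives $ab\neq 0$. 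For $(ii)\Rightarrow(i)$, suppose $ab\neq 0$ and $bc\neq 0$ with $a,b,c\in S^{*}$. By $(ii)$ we have $a^{*}=b^{+}$ and $b^{*}=c^{+}$, while the absorption fact gives $(bc)^{+}=b^{+}$. Hence $a^{*}=b^{+}=(bc)^{+}$, and applying $(ii)$ to the pair $(a,bc)$ produces $a(bc)\neq 0$, that is $abc\neq 0$; thus $S$ is categorical at $0$.

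The only genuine work lies in establishing the second fact, namely transferring the absorption behaviour of Lemma~\ref{Gjhon}$(iii)$ from primitive inverse semigroups to the $\mathcal{R^{*}}/\mathcal{L^{*}}$ world of primitive ample semigroups; once the idempotent identities $(ab)^{*}=b^{*}$ and $(bc)^{+}=b^{+}$ are available, both directions are immediate, with categoricity entering only through the forward half of $(i)\Rightarrow(ii)$ via the triple $a,a^{*},b$. The main point to watch throughout is the non-vanishing of the auxiliary idempotents: the primitivity arguments collapse if some $a^{*}$, $b^{+}$ or $(bc)^{+}$ is permitted to be $0$, and this is exactly what the observation $a\neq 0\Rightarrow a^{+},a^{*}\neq 0$ guards against.
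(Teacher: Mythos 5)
Your proof is correct and follows essentially the same route as the paper: primitivity applied to the non-zero idempotent product $a^*b^+$ for the forward half of $(i)\Rightarrow(ii)$, categoricity applied to the triple $a,a^*,b$ for the converse, and the identity $(bc)^+=b^+$ to reduce $abc\neq 0$ to an application of $(ii)$ for $(ii)\Rightarrow(i)$. The only (harmless) deviation is that you establish $(bc)^+=b^+$ via a separately proved absorption fact from adequacy and primitivity, whereas the paper gets it more economically by applying $(ii)$ itself to the pair $(b^+,bc)$, using $b^+(bc)=bc\neq 0$ and $(b^+)^*=b^+$.
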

\begin{proof}
 $(i)\Longrightarrow (ii)$ \  Let $a,b \in S$. If  $ab\neq 0$, then $aa^*b^+b\neq 0$, so $a^*b^+\neq 0$ and so by primitivity $a^*=b^+$. Conversely, if $a^*=b^+$, then $aa^*\neq 0, a^*b=b^+b\neq 0$, so by categoricity at 0, $ab=aa^*b\neq 0$.\\  $(ii) \Longrightarrow(i)$ Suppose that $ab\neq 0 , bc\neq 0$ where  $a,b,c \in S$, then  $a^*=b^+$ and  $b^*=c^+$. Hence  $b^+(bc)\neq 0$ gives  $b^+=(b^+)^*=(bc)^+$, but  $b^+=a^*$, that is,  $a^*=(bc)^+$. By assumption  $abc\neq 0$.\end{proof}

We can offer some simplification of Theorem~\ref{maintheorem} in the case that $S$ is adequate.
\begin{proposition}\label{jsimple}\cite[Proposition 5.5]{abundant}
For a semigroup $S$ with zero, the following conditions are equivalent:\\
1) \ $S$ is categorical at  0, 0-cancellative and satisfies:\\
for each element  $a$ of  $S$, there is an element  $e$ of  $S$ such that  $ea=a$ and an element $f$ of  $S$ such that  $af=a$\: ....................................(*)\\
2) \ $S$ is a primitive adequate semigroup.\\
3) \ $S$ is isomorphic to PA-blocked Rees $I\times I$ matrix semigroup  $\mathcal{M}(M_{\alpha\beta};I,I,\Gamma;P)$ where the sandwich matrix  $P$ is diagonal and  $p_{ii}=e_{\alpha}$ for each  $i\in I_{\alpha}, \alpha \in \Gamma$.\end{proposition}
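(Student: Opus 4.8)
The plan is to establish the cycle $(1)\Rightarrow(2)\Rightarrow(3)\Rightarrow(1)$, treating the equivalence $(2)\Leftrightarrow(3)$ as the specialisation of Fountain's generalised Rees theorem \cite{abundant} and reserving the genuine work for $(1)\Rightarrow(2)$. For $(2)\Leftrightarrow(3)$ I would invoke the structure theorem for primitive abundant semigroups from \cite{abundant}: every such semigroup is a PA-blocked Rees matrix semigroup, and in the \emph{adequate} case the idempotents commute, which forces the sandwich matrix to be diagonal with $p_{ii}=e_\alpha$ (the identity of the block $M_{\alpha\alpha}$) for $i\in I_\alpha$; conversely any semigroup of this explicit shape is readily seen to be primitive adequate. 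The implication $(3)\Rightarrow(1)$ I would then read directly off the coordinates: with a diagonal sandwich matrix a product $(i,m,j)(k,n,l)$ is non-zero exactly when $j=k$, so categoricity at $0$ and $0$-cancellativity reduce to the multiplication inside a single (cancellative) block, while $(*)$ is witnessed by the idempotents $(i,e_\alpha,i)$.

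The substance is $(1)\Rightarrow(2)$. First I would produce the idempotents. Given $a\in S^*$, by $(*)$ there is $e$ with $ea=a$; then $e^2a=ea=a\neq 0$, so $0$-cancellativity gives $e^2=e$, and Lemma~\ref{conc}(ii) (which needs exactly categoricity at $0$ and $0$-cancellativity) yields $e\,\mathcal{R}^*\,ea=a$. Dually the right identity supplied by $(*)$ is idempotent and $\mathcal{L}^*$-related to $a$. Hence every non-zero $\mathcal{R}^*$-class and $\mathcal{L}^*$-class contains an idempotent, so $S$ is abundant.

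Next I would show $E(S)$ is a \emph{semilattice} whose non-zero elements are \emph{primitive}. The clean first step is that $\mathcal{R}^*$-related non-zero idempotents coincide: if $e\,\mathcal{R}^*\,g$ with $e,g\in E(S^*)$, testing the defining condition of $\mathcal{R}^*$ with the pairs $(e,1)$ and $(g,1)$ gives $eg=g$ and $ge=e$, whence $eg=g=g^2\neq 0$ and $0$-cancellativity forces $e=g$; dually for $\mathcal{L}^*$. Thus the idempotent in each $\mathcal{R}^*$-class (resp. $\mathcal{L}^*$-class) is unique, giving well-defined $a^+,a^*$. For commutativity and primitivity one takes $e,f\in E(S^*)$ with $ef\neq 0$: Lemma~\ref{conc}(ii) and its dual give $e\,\mathcal{R}^*\,ef$ and $f\,\mathcal{L}^*\,ef$, so by uniqueness $(ef)^+=e$ and $(ef)^*=f$. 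If one can show that $ef$ is itself idempotent, then uniqueness forces $e=ef=f$, which simultaneously yields $ef=fe$ for all idempotents (distinct ones annihilating each other) and primitivity of $E(S^*)$, completing $(1)\Rightarrow(2)$.

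Proving that $ef$ is idempotent --- equivalently, ruling out nonzero ``square-zero'' behaviour such as $fe=0$ while $ef\neq 0$ --- is the main obstacle. The conditions $(*)$, categoricity and $0$-cancellativity are all genuinely needed here: without $(*)$ the two-element nilpotent semigroup $\{n,0\}$ with $n^2=0$ satisfies the remaining hypotheses vacuously, so the argument must feed the idempotent left/right identities of $ef$ back through categoricity to translate the non-vanishing of $ef$ into that of $(ef)^2$. I expect this to require the finer analysis of \cite{abundant}, after which the remaining verifications that $S$ is left and right adequate, and the identification of the resulting structure with the diagonal PA-blocked Rees form of $(3)$, are routine.
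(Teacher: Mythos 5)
The paper itself gives no proof of this proposition --- it is quoted verbatim as Proposition 5.5 of \cite{abundant} --- so there is no internal argument to compare yours with; I can only assess your proposal on its own terms. Its architecture ($(1)\Rightarrow(2)\Rightarrow(3)\Rightarrow(1)$, with $(2)\Leftrightarrow(3)$ and $(3)\Rightarrow(1)$ read off Fountain's structure theorem and the coordinates) is reasonable, and your derivation of abundance from $(*)$ together with $0$-cancellativity and Lemma~\ref{conc}(ii) is correct. But as written the proof has a genuine gap exactly where you flag one: the claim that nonzero idempotents $e,f$ with $ef\neq 0$ must coincide is the entire content of $(1)\Rightarrow(2)$ beyond abundance, and you defer it to ``the finer analysis of \cite{abundant}.'' Since the proposition you are proving \emph{is} that result of \cite{abundant}, leaving the decisive step to that reference makes the argument incomplete (indeed circular).

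The step is, however, elementary, and needs only $0$-cancellativity --- none of the $\mathcal{R}^*$-uniqueness machinery, Lemma~\ref{conc}, or categoricity that you assemble around it. From $e(ef)=(ee)f=ef=e\cdot f\neq 0$, cancel $e$ on the left to get $ef=f$; from $(ef)f=e(ff)=ef=e\cdot f\neq 0$, cancel $f$ on the right to get $ef=e$; hence $e=ef=f$. This kills the ``square-zero'' scenario $ef\neq 0$, $fe=0$ that worried you, shows at once that distinct nonzero idempotents annihilate each other (so $E(S)$ is a semilattice and every nonzero idempotent is primitive), and subsumes your separate argument that $\mathcal{R}^*$-related idempotents coincide. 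With this two-line computation inserted, $(1)\Rightarrow(2)$ is complete and the rest of your outline goes through.
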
 
From the above lemma and Theorem~\ref{maintheorem}, the following lemma is clear.
\begin{lemma}\label{abundantleftprimitivie}
For a semigroup $S$ with zero, the following conditions are equivalent: \\
1) \ $S$ is abundant and left I-order in a primitive inverse semigroup $Q$; \\
2) \ $S$ is a primitive adequate semigroup and $\lambda$ is transitive;\\
3) \ $S$ is 0-cancellative, categorical at 0, $S$ satisfies (*)and $\lambda$ is transitive. \end{lemma}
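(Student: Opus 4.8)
The plan is to prove Lemma~\ref{abundantleftprimitivie} by establishing the cycle of implications $(1)\Rightarrow(2)\Rightarrow(3)\Rightarrow(1)$, leaning on Proposition~\ref{jsimple} to translate between the ``primitive adequate'' formulation and the concrete $0$-cancellative/categorical/$(*)$ formulation. The central observation, already flagged in the excerpt immediately before the statement, is that for an abundant semigroup the hypotheses of Theorem~\ref{maintheorem} collapse considerably, so the only genuinely new ingredient beyond the cited results is checking that condition $(*)$ and condition $(D)$ are equivalent in this setting.

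First I would prove $(1)\Rightarrow(2)$. Assume $S$ is abundant and a left I-order in a primitive inverse semigroup $Q$. By the discussion preceding Lemma~\ref{lapi}, since $S$ is abundant it must in fact be adequate (indeed ample), and by Proposition~\ref{Inthenameofgod} we have $\mathcal{R}\cap(S\times S)=\mathcal{R}^*$ together with $\lambda$ transitive (Condition~(C)). The idempotents $a^+$ and $a^*$ supplied by adequacy give $a^+a=a$ and $aa^*=a$, so $S$ is primitive adequate; combined with the transitivity of $\lambda$ this is exactly~(2). For $(2)\Rightarrow(3)$ I would simply invoke Proposition~\ref{jsimple}, whose equivalence of its own conditions (1) and (2) says that ``primitive adequate'' is the same as ``$0$-cancellative, categorical at $0$, and $(*)$''; carrying the extra hypothesis ``$\lambda$ transitive'' through this equivalence yields~(3) verbatim.

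The main work is $(3)\Rightarrow(1)$, and the key step there is to deduce Condition~(D), namely $Sa\neq 0$ for all $a\in S^*$, from condition~$(*)$. This is immediate: $(*)$ furnishes for each $a\in S^*$ an element $e\in S$ with $ea=a\neq 0$, whence $Sa\neq 0$. Thus, granting the hypotheses of~(3), the semigroup $S$ is $0$-cancellative (Condition~(B)), categorical at $0$ (Condition~(A)), has $\lambda$ transitive (Condition~(C)), and satisfies~(D); by Theorem~\ref{maintheorem} it is therefore a left I-order in a primitive inverse semigroup $Q$. It remains only to note that $S$ is abundant, which follows from~$(*)$ and Proposition~\ref{jsimple}: the condition~(3) data makes $S$ primitive adequate, and adequate semigroups are in particular abundant. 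This closes the cycle and gives~(1).

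I expect the only potential obstacle to be bookkeeping rather than mathematics: one must be careful that the extra clause ``$\lambda$ is transitive'' rides along correctly through the applications of Proposition~\ref{jsimple}, since that proposition as stated concerns only the primitive-adequate conditions and is silent about $\lambda$. The cleanest way to handle this is to treat the transitivity of $\lambda$ as an independent rider attached to the primitive-adequate package, so that Proposition~\ref{jsimple} is used purely to interchange formulations~(2) and~(3) of \emph{that} package while $\lambda$-transitivity is simply appended on both sides. With that caveat observed, every implication reduces to a direct citation of Theorem~\ref{maintheorem}, Proposition~\ref{jsimple}, or Proposition~\ref{Inthenameofgod}, and no new computation is required.
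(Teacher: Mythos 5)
Your proof is correct and takes essentially the same route as the paper, which merely remarks that the lemma is ``clear'' from Proposition~\ref{jsimple} and Theorem~\ref{maintheorem}: your cycle of implications, with the bridge that $(*)$ yields Condition~(D), is exactly the intended filling-in. The one point to make explicit in $(1)\Rightarrow(2)$ is that categoricity at $0$ and $0$-cancellativity of $S$ (inherited from the primitive inverse oversemigroup $Q$, as in the forward direction of Theorem~\ref{maintheorem}) are also needed before Proposition~\ref{jsimple} can convert adequacy plus $(*)$ into ``primitive adequate''.
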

  
In the two-sided case we have the following.
\begin{lemma}\label{abundantmain}
For a semigroup with zero, the following conditions are equivalence:\\
1) \ $S$ is abundant and an I-order in a primitive inverse semigroup $Q$;\\
2) \ $S$ is primitive adequate and $\lambda , \rho$ are transitive;\\
3) \ $S$ is 0-cancellative, categorical at 0, satisfies (*) and $\lambda , \rho$ are transitive. \end{lemma}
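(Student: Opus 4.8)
The plan is to prove the three-way equivalence in Lemma~\ref{abundantmain} by recognising that it is the exact two-sided analogue of Lemma~\ref{abundantleftprimitivie}, obtained by adding the dual transitivity condition on $\rho$ throughout. I would therefore build the proof by combining the one-sided result Lemma~\ref{abundantleftprimitivie} with its dual, and then invoking the two-sided machinery from Section~\ref{sec:I-order}, principally Proposition~\ref{conclusionpro}. The natural cycle is $(1)\Rightarrow(2)\Rightarrow(3)\Rightarrow(1)$, although since most of the equivalences are already recorded it may be cleaner to show $(2)\Leftrightarrow(3)$ directly and then $(1)\Leftrightarrow(2)$.

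First I would dispose of $(2)\Leftrightarrow(3)$, which is the purely internal characterisation and carries no reference to the overgroup $Q$. Here the work is just Proposition~\ref{jsimple}: by that result, ``$S$ is primitive adequate'' is equivalent to ``$S$ is $0$-cancellative, categorical at $0$, and satisfies $(*)$''. Adjoining the hypothesis that both $\lambda$ and $\rho$ are transitive to each side gives $(2)\Leftrightarrow(3)$ immediately. This step is essentially a quotation and should take only a line.

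Next I would handle $(1)\Leftrightarrow(2)$. For $(1)\Rightarrow(2)$, suppose $S$ is abundant and an I-order in a primitive inverse semigroup $Q$. Being an I-order, $S$ is in particular a left I-order, so by Lemma~\ref{abundantleftprimitivie} it is primitive adequate and $\lambda$ is transitive; applying the dual of Lemma~\ref{abundantleftprimitivie} (using that $S$ is also a right I-order) yields that $\rho$ is transitive as well. For the converse $(2)\Rightarrow(1)$, assume $S$ is primitive adequate with both $\lambda$ and $\rho$ transitive. By Lemma~\ref{abundantleftprimitivie} the conditions ``primitive adequate and $\lambda$ transitive'' give that $S$ is abundant and a left I-order in some primitive inverse semigroup $Q$, which in particular means $S$ satisfies Conditions (A)--(D) of Theorem~\ref{maintheorem}. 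To upgrade this to a two-sided I-order I would invoke Proposition~\ref{conclusionpro}: I need the dual conditions $\acute{(C)}$ and $\acute{(D)}$. Transitivity of $\rho$ is exactly $\acute{(C)}$, and $\acute{(D)}$, namely $aS\neq 0$ for all $a\in S^*$, follows from primitivity and adequacy, since for $a\in S^*$ the idempotent $a^*$ satisfies $aa^*=a\neq 0$ by $(*)$. Thus $S$ satisfies (A),(B),(C),(D),$\acute{(C)}$ and $\acute{(D)}$, and Proposition~\ref{conclusionpro} gives that $S$ is an I-order in a primitive inverse semigroup, completing $(2)\Rightarrow(1)$.

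The routine obstacle is bookkeeping rather than mathematics: I must be careful that the primitive inverse semigroup produced as the left I-quotients in Lemma~\ref{abundantleftprimitivie} coincides (up to isomorphism) with the one in which $S$ becomes a two-sided I-order, but this is guaranteed by the uniqueness result Corollary~\ref{brandtunique} together with Proposition~\ref{twosided}, so no genuine difficulty arises. The only point demanding a moment's care is verifying $\acute{(D)}$ from adequacy, i.e.\ checking that the right-hand local identity supplied by $(*)$ is nonzero; this is where I would make sure not to conflate the existence of $e,f$ in $(*)$ with their being idempotents of the correct $\mathcal{L}^*$- and $\mathcal{R}^*$-classes. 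Overall I expect the entire argument to reduce to a short assembly of the cited results, with the dual of Lemma~\ref{abundantleftprimitivie} carrying the main weight.
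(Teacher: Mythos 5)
Your proposal is correct, and it is essentially the argument the paper intends: the paper states Lemma~\ref{abundantmain} without proof, treating it as immediate from Lemma~\ref{abundantleftprimitivie} and its dual, Proposition~\ref{jsimple}, and the two-sided results of Section~\ref{sec:I-order} (Proposition~\ref{conclusionpro}, Proposition~\ref{twosided}, Corollary~\ref{brandtunique}), which is exactly the assembly you carry out. Your verification of $\acute{(D)}$ via $aa^*=a\neq 0$ is the right small check to make explicit.
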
 
 
	We cannot use any thing in \cite{GG}, because we did not talk about zero in that paper or we should add it or rather we should cancel the following Prop. 
  
\begin{proposition}\label{unionofr}
Let  $S$ be a left ample semigroup and left I-order in a primitive inverse semigroup  $Q$. If  $S$ is a union of  $\mathcal{R}$-classes of  $Q$, then  $Q \cong \Sigma(S)$.\end{proposition}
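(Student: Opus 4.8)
The plan is to reduce the claim to the uniqueness theory already developed, rather than to compare the two constructions term by term. Recall from \cite{GG} that $\Sigma(S)$ is the inverse semigroup of left I-quotients attached to a left ample semigroup that is a union of $\mathcal{R}$-classes of an inverse semigroup; since $S$ is left ample and, by hypothesis, a union of $\mathcal{R}$-classes of $Q$, this construction applies and exhibits $S$ as a straight left I-order in $\Sigma(S)$. On the other hand, by Proposition~\ref{Inthenameofgod}(3) $S$ is a straight left I-order in $Q$ as well. Thus both $Q$ and $\Sigma(S)$ are inverse semigroups in which $S$ sits as a straight left I-order, and the goal becomes to match them by applying Corollary~\ref{cor:iso} to the identity map $\mathrm{id}_S\colon S\to S$.

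To run Corollary~\ref{cor:iso} with $\phi=\mathrm{id}_S$ I would verify its two conditions, each of which asks that a relation on $S$ computed in $Q$ agree with the one computed in $\Sigma(S)$. For the $\mathcal{R}$-condition this is immediate: by Proposition~\ref{Inthenameofgod}(5) the restriction $\mathcal{R}^Q_S$ equals $\mathcal{R}^*$, a relation intrinsic to $S$, and the same computation in $\Sigma(S)$ yields the same $\mathcal{R}^*$. For the ternary relation $\mathcal{T}$ I would unwind $ab^{-1}Q\subseteq c^{-1}Q$ exactly as in the proof of Theorem~\ref{thm:brandhoms}: when $ab^{-1}=0$ the condition is governed by failure of $\lambda$, and when $ab^{-1}\neq 0$ Corollaries~\ref{pslio} and~\ref{prop} together with primitivity reduce it to the purely internal requirement that $a\,\lambda\,b$ together with an $\mathcal{R}^*$-condition linking $a$ and $c$. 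Since these descriptions refer only to $\mathcal{R}^*$ and $\lambda$ in $S$, they coincide for $Q$ and for $\Sigma(S)$, so $\mathcal{T}^Q_S=\mathcal{T}^{\Sigma(S)}_S$. Corollary~\ref{cor:iso} then gives an isomorphism $Q\to\Sigma(S)$ that is the identity on $S$, which is the assertion.

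The main obstacle is the presence of the zero, precisely the point recorded in the remark above. The construction of $\Sigma(S)$ and the uniqueness criterion of \cite{GG} quoted in Corollary~\ref{cor:iso} were carried out for left ample semigroups without a zero element, whereas here $S$ necessarily contains $0$ (Proposition~\ref{Inthenameofgod}(1)) and $Q$ is a $0$-direct union of Brandt semigroups. Before the argument above can be made rigorous one must therefore rework the relevant parts of \cite{GG} in the pointed setting: adjoin a zero to $\Sigma(S)$, check that its multiplication and inversion behave correctly at $0$, and confirm that the ternary relation $\mathcal{T}$ and the isomorphism criterion survive this modification. I expect this bookkeeping, rather than any genuinely new idea, to be the real content of the proof; once it is in place, the two verifications of the middle paragraph go through unchanged and the uniqueness statement delivers $Q\cong\Sigma(S)$.
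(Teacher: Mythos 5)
Your overall strategy --- exhibit $S$ as a straight left I-order in both $Q$ and $\Sigma(S)$ and then invoke the uniqueness machinery of Section~\ref{sec:UBrandt} --- is in the right spirit, and your concern about the absence of a zero in the $\Sigma(S)$ construction of \cite{GG} is legitimate (the paper itself flags this). But there is a genuine gap where you verify the hypotheses of Corollary~\ref{cor:iso}: to describe $\mathcal{R}^{\Sigma(S)}_S$ and $\mathcal{T}^{\Sigma(S)}_S$ internally you appeal to Proposition~\ref{Inthenameofgod} and Corollaries~\ref{pslio} and~\ref{prop}, all of which are proved only for left I-orders in \emph{primitive} inverse semigroups. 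These results apply to $Q$ by hypothesis, but you have not shown that $\Sigma(S)$ is primitive, so you cannot conclude that the relations computed in $\Sigma(S)$ admit the same internal description (in terms of $\mathcal{R}^*$ and $\lambda$) as those computed in $Q$. Your claim ``$\mathcal{T}^Q_S=\mathcal{T}^{\Sigma(S)}_S$'' therefore rests on exactly the fact that still needs proving.

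Establishing primitivity of $\Sigma(S)$ is precisely the content of the paper's proof: given $0\neq e\leq f$ in $E(\Sigma(S))$, write $e=a^{-1}a$ and $f=b^{-1}b$ with $a,b\in S$; then $e=ef\neq 0$ forces $ab^{-1}\neq 0$, one writes $ab^{-1}=c^{-1}d$ with $c\,\mathcal{R}^*\,d$, deduces $ca=db\neq 0$, hence $a\,\mathcal{L}\,b$ in $Q$, in $S$, and in $\Sigma(S)$, so $e=f$. Once primitivity of $\Sigma(S)$ is in hand, the $\mathcal{T}$-condition need not be checked at all: Corollary~\ref{brandtunique} (uniqueness of the primitive inverse semigroup of left I-quotients) yields $Q\cong\Sigma(S)$ over $S$ immediately. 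So either supply the primitivity argument --- after which your route collapses to the paper's shorter one --- or compute $\mathcal{T}^{\Sigma(S)}_S$ directly from the explicit construction of $\Sigma(S)$ in \cite{GG} without assuming primitivity; as written, neither is done.
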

\begin{proof}By Lemma~\ref{brandtunique}, it is enough to show that $\Sigma(S)$ is primitive, since by Theorem 3.7 and Lemma 3.6 of \cite{GG}, $S$ is a left I-order in $\Sigma(S)$. Let $0\neq e\leq f$ in $\Sigma(S)$, then $e=a^{-1}a$ and $f=b^{-1}b$ for some $a,b \in S$ where $e,f \in E(\Sigma(S))$. We have $0\neq e=ef$, so that $ab^{-1}\neq 0$ and $ab^{-1}=c^{-1}d$ for some $c,d \in S$ with $c\,\mathcal{R^*}\,d$ in $S$ and so $c\,\mathcal{R}\,d$ in $\Sigma(S)$. Then by Lemma 2.6 of \cite{GG}, $ca=db\neq 0$, so that in $Q$, $a\,\mathcal{L}\,b$ and so  $a\,\mathcal{L}\,b$  in $S$ by Lemma 3.6 of \cite{GG} therefore in $\Sigma(S)$. Hence $e=f$. \end{proof}

\end{document}